\setlist[enumerate,itemize]{itemsep=0.5ex}
\theoremstyle{plain}
\newtheorem{theorem}{Theorem}[section]
\newtheorem{proposition}[theorem]{Proposition}
\newtheorem{lemma}[theorem]{Lemma}
\newtheorem{thmx}{Theorem}
\theoremstyle{definition}
\theoremstyle{remark} 
\newtheorem{remark}[theorem]{Remark}
\numberwithin{equation}{section}
\newcommand{\interior}[1]{{\kern0pt#1}^{\mathrm{o}}}
\newcommand{\sph}{\mathbb{S}}
\DeclareMathOperator{\R}{\mathbb{R}}
\DeclareMathOperator{\Z}{\mathbb{Z}}
\DeclareMathOperator{\Lip}{Lip}
\DeclareMathOperator{\vol}{vol}
\DeclareMathOperator{\scal}{Sc}
\DeclareMathOperator{\Ric}{Ric}
\DeclareMathOperator{\mean}{H}
\DeclareMathOperator{\II}{A}
\DeclareMathOperator{\pr}{pr}
\providecommand\@dotsep{5}
\renewcommand{\listoftodos}[1][\@todonotes@todolistname]{%
  \@starttoc{tdo}{#1}}
\begin{document}

\title{scalar curvature rigidity of the four-dimensional sphere}

\author{Simone Cecchini}
\address[Simone Cecchini]{Department of Mathematics, Texas A\&M University}
\email{cecchini@tamu.edu}

\author{Jinmin Wang}
\address[Jinmin Wang]{Department of Mathematics, Texas A\&M University}
\email{jinmin@tamu.edu}
\thanks{The second author is partially supported by  NSF  1952693, 2247322 and NSFC 12171095.}
 
\author{Zhizhang Xie}
\address[Zhizhang Xie]{ Department of Mathematics, Texas A\&M University }
\email{xie@tamu.edu}
\thanks{The third author is partially supported by NSF  1952693 and 2247322.}

\author{Bo Zhu}
\address[Bo Zhu]{ Department of Mathematics, Texas A\&M University }
\email{bozhu@tamu.edu}
\thanks{The fourth author is partially supported by NSF  1952693 and 2247322.}

\begin{abstract}
Let $(M,g)$ be a four-dimensional closed connected oriented (possibly non-spin) Riemannian manifold with scalar curvature bounded below by $n(n-1)$. We prove that, if $f$ is a smooth distance non\nobreakdash-increasing map of non\nobreakdash-zero degree from $(M, g)$ to the unit four-sphere, then $f$ is an isometry.
Following ideas of Gromov, we utilize $\mu$\nobreakdash-bubbles and a version with coefficients of the rigidity of the three-sphere to rule out the case where all the inequalities are strict.
Our proof of rigidity exploits monotonicity results for the harmonic map heat flow coupled with the Ricci flow due to Lee and Tam.
\end{abstract}
\maketitle

\section{Introduction}
Extremality and rigidity properties of Riemannian manifolds with lower scalar curvature bounds have been the subject of intensive study in recent years.
For a comprehensive overview of the subject, we refer to Gromov’s \emph{Four lectures on scalar curvature}~\cite{gromov-four-lectures}.
A cornerstone result in comparison geometry with scalar curvature is the rigidity of the round sphere in the spin setting, established by Llarull.
Throughout this paper, we denote by $g_{\sph^n}$ the standard round metric on the $n$\nobreakdash-dimensional sphere $\sph^n$.

\begin{theorem}[{\cite[Theorem~B]{Llarull}}]\label{thm:Llarull}
    Let $(M,g)$ be an $n$-dimensional closed connected spin Riemannian manifold with $\scal_g\geq n(n-1)$.
    If $f\colon(M,g)\to(\sph^n,g_{\sph^n})$ is a smooth, distance non\nobreakdash-increasing map of non\nobreakdash-zero degree,
    then $f$ is an isometry.
\end{theorem}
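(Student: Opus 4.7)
The plan is to use the spin Dirac operator twisted by the pullback of the spinor bundle of the sphere, together with the Lichnerowicz--Schrödinger--Weitzenböck formula. Let $S_M$ denote the complex spinor bundle of $M$ and let $S_{\sph^n}$ be the spinor bundle of the round sphere. Set $E = f^*S_{\sph^n}$ equipped with the pulled-back hermitian connection. When $n$ is even I would twist by the chirality-positive subbundle $f^*S_{\sph^n}^+$; when $n$ is odd I would instead work with a $\cl_n$-linear version of the Dirac operator and twist by $E$. In either case the twisted Dirac operator $D_E$ on $S_M \otimes E$ has index that, by Atiyah--Singer, is computed from $\mathrm{ch}(E)$ and reduces to a nonzero multiple of $\deg(f)$. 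Hence there exists a nontrivial section $\sigma \in \ker D_E$.

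The heart of the argument is the Weitzenböck identity
\[
D_E^2 = \nabla^*\nabla + \tfrac{1}{4}\scal_g + \mathcal{R}^E,
\]
where $\mathcal{R}^E$ is the curvature endomorphism coming from the twist. Because $\sph^n$ has constant sectional curvature one, the pulled-back curvature $R^E$ can be written explicitly in terms of $df$. At each point of $M$ I would diagonalize $df$ and label its singular values $\lambda_1,\ldots,\lambda_n$; the distance non-increasing hypothesis gives $\lambda_i \leq 1$ everywhere. A Clifford-algebra computation on the extremal chirality summand of $S_M \otimes E$, which I expect to be the main obstacle of the proof, yields the sharp pointwise lower bound
\[
\mathcal{R}^E \geq -\tfrac{1}{4}\,n(n-1)\max_i \lambda_i^2 \;\geq\; -\tfrac{1}{4}\,n(n-1),
\]
with equality only when all $\lambda_i = 1$. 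Combined with $\scal_g \geq n(n-1)$, this gives $D_E^2 \geq \nabla^*\nabla$ pointwise on the appropriate summand, the key sharpness being that the spherical scalar curvature is matched exactly by the curvature endomorphism on the extremal spinor summand.

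Pairing against the harmonic section $\sigma$ produced by the index argument and integrating, one obtains $\int_M |\nabla \sigma|^2 = 0$, so $\sigma$ is parallel. A nonzero parallel section cannot vanish on any open set, so all intermediate inequalities become pointwise equalities on all of $M$: the scalar curvature equals $n(n-1)$ identically, and equality in the curvature bound forces $\lambda_i(x) = 1$ for every $i$ and every $x$. Hence $df$ is an orthogonal isomorphism at each point, so $f$ is a local isometry. Since $M$ is closed and connected and $f$ has non-zero degree, $f$ is a Riemannian covering of $\sph^n$, and because $\sph^n$ is simply connected the covering is trivial, so $f$ is an isometry.
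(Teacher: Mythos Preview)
The paper does not supply its own proof of this statement: \cref{thm:Llarull} is quoted as a known result from Llarull's original paper and serves only as background motivation. So there is no proof in the paper to compare your proposal against.

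That said, your sketch is essentially Llarull's original argument: twist the spin Dirac operator by the pullback of the spinor bundle of the sphere, use Atiyah--Singer to produce a nontrivial harmonic section, apply the Lichnerowicz--Weitzenb\"ock formula, and bound the curvature endomorphism pointwise. This is the correct strategy.

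One point deserves care. The sharp pointwise bound that actually comes out of the Clifford-algebra computation is
\[
\mathcal{R}^E \;\geq\; -\tfrac{1}{4}\sum_{i\neq j}\lambda_i\lambda_j,
\]
not $-\tfrac{1}{4}n(n-1)\max_i\lambda_i^2$. Both bounds yield $\mathcal{R}^E\geq -\tfrac{1}{4}n(n-1)$ under the hypothesis $\lambda_i\leq 1$, but only the first gives the rigidity conclusion cleanly: equality $\sum_{i\neq j}\lambda_i\lambda_j=n(n-1)$ with each $\lambda_i\leq 1$ forces every $\lambda_i=1$, whereas equality in $\max_i\lambda_i^2=1$ only pins down a single singular value. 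Your claim ``with equality only when all $\lambda_i=1$'' therefore needs the pairwise-product form of the estimate to be justified. Apart from this, the outline is sound and matches the method in the cited reference.
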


This result illustrates the beautiful interplay between metric, curvature and topological information in scalar curvature geometry.
Its proof relies on the Dirac operator method, requiring the hypothesis that $M$ is spin.
A big open question in the field is whether the spin assumption can be dispensed with in \cref{thm:Llarull}.
In this paper, we address this question affirmatively, at least in dimension four.

\begin{thmx}\label{thm:sphere}
    Let $(M,g)$ be an four-dimensional closed connected oriented (possibly non-spin) Riemannian manifold with $\scal_g\geq12$.
    If $f\colon(M,g)\to(\sph^n,g_{\sph^n})$ is a smooth, distance non\nobreakdash-increasing map of non\nobreakdash-zero degree,
    then $f$ is an isometry.
\end{thmx}

Our approach intertwines various techniques from geometric analysis: minimal hypersurfaces, Ricci flow, and harmonic map heat flow.
A key tool in our method is the utilization of $\mu$\nobreakdash-bubbles, that are stable solutions to prescribed mean curvature problems.
This technique, pioneered by Gromov~\cite[Section~5\nicefrac{5}{6}]{Gromov-macroscopic-dimension}, has been successfully used in addressing some challenging questions in scalar curvature geometry.
Examples of its applications can be found in~\cites{li-polyhedron, chodosh2023generalized, gromov2020metrics, zhu_width_mu, chodosh-sufficiently-connected, cecchini-two-ends}.
Drawing from Gromov's ideas~\cite[Section~5]{gromov-four-lectures}, we utilize this technique to rule out the case where all the inequalities in \cref{thm:sphere} are strict.
However, directly proving extremality and rigidity using $\mu$-bubbles poses a significant challenge.
To overcome this difficulty, we exploit the monotonicity of the harmonic map heat flow coupled with the Ricci flow, recently established by Lee and Tam~\cite{LeeTam}.
This approach enables us to reduce \cref{thm:sphere} to the situation where all the inequalities are strict, except when the metric is Einstein.
Remarkably, Llarull's rigidity theorem for Einstein manifolds follows from classical comparison geometry.

To further illustrate our strategy, let us compare it with Gromov's perspective~\cite[Section~5.7]{gromov-four-lectures}.
We regard $\sph^n$ with two antipodal points removed as a warped product over $\sph^{n-1}$.
From this viewpoint, \cref{thm:Llarull} becomes a question about the scalar curvature rigidity of the degenerate spherical band 
\begin{equation}\label{eq:degenerate_band}
    \left(\sph^{n-1}\times(-\pi/2,\pi/2),\cos^2(t)g_{\sph^{n-1}}+dt^2\right).
\end{equation}
In the spin setting, this question has been recently addressed independently by B\"ar-Brendle-Hanke-Wang~\cite{Baer:2023aa} and by the second and third authors~\cite{wang2023scalar} utilizing the Dirac operator method.
Gromov outlined an alternative approach to this problem employing $\mu$\nobreakdash-bubbles.
The crucial observation~\cite[Section~5.5]{gromov-four-lectures} is that the variational formulas for $\mu$\nobreakdash-bubbles, in this context, are related to the following stronger version of \cref{thm:Llarull}, due to Listing. 

\begin{theorem}[{\cite[Theorem~1]{Listing:2010te}}]\label{thm:Listing}
    Let $(M,g)$ be an $n$-dimensional closed connected spin Riemannian manifold.
    If $f\colon(M,g)\to(\sph^n,g_{\sph^n})$ is a smooth map of non\nobreakdash-zero degree such that $\scal_g(p)\geq n(n-1)\left\|df_p\right\|^2$ for any $p\in M$,
    then there exists a constant $c>0$ such that $f\colon(M,c\cdot g)\to(\sph^n,g_{\sph^n})$ is an isometry.  
\end{theorem}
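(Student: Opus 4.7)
The approach is the twisted Dirac operator method of Llarull. Since $M$ is spin, we form the twisted bundle $E \coloneqq f^{\ast} S_{\sph^n}$ (assume $n$ is even; the odd-dimensional case reduces to this by crossing with $\sph^1$) and consider the associated twisted Dirac operator $D_E$ acting on $S_M \otimes E$, where $S_M$ is the spinor bundle of $M$. A standard characteristic class computation identifies $\ind(D_E^+)$ with a nonzero integer multiple of $\deg(f)$, since the top-degree component of $\mathrm{ch}(S_{\sph^n})$ pairs nontrivially with $[\sph^n]$. Hence $\ker D_E \neq 0$, and we fix a nonzero harmonic twisted spinor $\psi$.

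The Lichnerowicz--Schr\"odinger formula reads
\[
D_E^{2} = \nabla^{\ast}\nabla + \tfrac{1}{4}\scal_g + \mathcal{R}^E,
\]
and the key pointwise input, due to Llarull, is
\[
\langle \mathcal{R}^E \varphi, \varphi\rangle \geq -\tfrac{1}{4}n(n-1)|df|^{2}|\varphi|^{2}
\]
for every spinor $\varphi$, obtained by diagonalizing $df^{\ast}df$ and using the explicit form of the curvature of $S_{\sph^n}$. Combining these two bounds with the hypothesis $\scal_g \geq n(n-1)|df|^{2}$ and integrating $\|D_E\psi\|_{L^{2}}^{2} = 0$ yields $\nabla \psi \equiv 0$ and pointwise equality in both inequalities on the set $U \coloneqq \{\psi \neq 0\}$. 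Parallelism of $\psi$ forces $|\psi|$ to be constant, so $U = M$.

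Equality in Llarull's bound at every point forces $df^{\ast}_{x}df_{x} = \lambda(x)^{2}\,\mathrm{id}$ for a nonnegative function $\lambda$, so $f^{\ast}g_{\sph^n} = \lambda^{2}g$, and $\scal_g = n(n-1)\lambda^{2}$. To conclude, one shows $\lambda$ is a positive constant: the equality case in Llarull's estimate, coupled with $\nabla \psi = 0$, pins down the algebraic form of the full curvature tensor of $(M,g)$ in terms of $\lambda$, and differentiating the parallel relation $\nabla(\mathcal{R}^E \psi) = 0$ yields $d\lambda = 0$. Setting $a \coloneqq \lambda^{2} > 0$, the rescaled map $f \colon (M, a\cdot g) \to (\sph^n, g_{\sph^n})$ is a local isometry of nonzero degree between closed manifolds of the same dimension; using simple connectedness of $\sph^n$ for $n \geq 2$, it is an honest isometry.

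\textbf{Main obstacle.} The most delicate ingredient is the pointwise curvature bound on $\mathcal{R}^E$, which relies on a careful eigenvalue analysis of the action of the curvature of $S_{\sph^n}$ on $S_M \otimes E$; this is the technical heart of Llarull's method. A secondary subtlety is promoting the pointwise conformality obtained on $M$ to the constancy of the conformal factor $\lambda$, which is exactly where the scale-invariant hypothesis $\scal_g \geq n(n-1)|df|^{2}$, in contrast with the 1-Lipschitz hypothesis of \cref{thm:Llarull}, forces the conclusion to involve the rescaled metric $a\cdot g$ rather than $g$ itself.
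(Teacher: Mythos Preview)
The paper does not prove \cref{thm:Listing}; it is quoted from Listing's original article, with an additional pointer to \cite{wang2023scalar} for a proof. There is therefore no in-paper argument to compare against. Your outline is the standard twisted Dirac operator approach underlying both of those references, and the index computation, Lichnerowicz formula, and pointwise curvature bound are stated correctly.

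The one genuine gap is the passage from pointwise conformality $f^{\ast}g_{\sph^n}=\lambda^{2}g$ to constancy of $\lambda$. Your sentence ``differentiating the parallel relation $\nabla(\mathcal{R}^E\psi)=0$ yields $d\lambda=0$'' is not justified: one does \emph{not} have $\nabla(\mathcal{R}^E\psi)=0$ a priori, since $\mathcal{R}^E$ depends on $df$ and is not covariantly constant. What one actually has is $\mathcal{R}^E\psi=-\tfrac{1}{4}\scal_g\,\psi$ with $\psi$ parallel, and differentiating this yields $(\nabla_X\mathcal{R}^E)\psi=-\tfrac{n(n-1)}{4}X(\lambda^{2})\psi$, which is a nontrivial identity but does not by itself force $d\lambda=0$. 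In the cited proofs this step is handled differently: for instance, one shows that the equality case forces $f$ to be simultaneously conformal and harmonic, and then invokes the classical fact that a harmonic conformal map between equidimensional Riemannian manifolds with $n\geq 3$ is a homothety (constant conformal factor). Alternatively, one analyzes the Clifford-algebraic equality conditions more carefully to pin down the curvature of $(M,g)$ directly. Either way, this step requires real work that your sketch does not supply.
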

\noindent Here $\left\|df_p\right\|$ denotes the operator norm of the linear map $df_p$, see the discussion before \cref{lem:separating_hypersurface}.
For the proof of this theorem, we also refer to~\cite[Theorem 3.3]{wang2023scalar}.
Since all three-dimensional oriented manifolds are spin, this observation enables us to use $\mu$\nobreakdash-bubbles to study extremality and rigidity properties of spherical bands in dimension four without the spin condition.

Employing the outlined strategy to study the degenerate spherical band~\eqref{eq:degenerate_band} poses two main challenges.
Firstly, constructing suitable $\mu$\nobreakdash-bubbles on open incomplete manifolds presents significant difficulties.
While in dimension three this construction has been successfully carried out by Hu, Liu, and Shi~\cite{hu-3d-spherical} leveraging the extra control provided by the Gauss\nobreakdash-Bonnet theorem, how to extend this approach to higher dimensions remains unclear.
The authors intend to tackle this issue in future work.
Secondly, it is implied in~\cite[Section~5.5]{gromov-four-lectures} that the same construction is expected to carry over to degenerate bands over the torus, provided that the warping function is log-concave.
However, \cite[Example~4.1]{wang2023scalar} shows that this condition is not sufficient for the rigidity of torical bands and, \emph{a fortiori}, for the existence of a $\mu$-bubble with the desired properties.
The authors also plan to investigate general conditions for the existence of $\mu$\nobreakdash-bubbles in open incomplete bands in future work.

Instead of focusing on degenerate spherical bands, this paper adopts a different strategy to establish \cref{thm:sphere}, as outlined below.
\begin{enumerate}
    \item We first rule out from \cref{thm:sphere} the case when all the inequalities are strict, following Gromov's approach.
    This involves utilizing $\mu$\nobreakdash-bubbles and \cref{thm:Listing} in dimension three. 
    More precisely, we make use of these techniques to prove a comparison theorem with scalar and mean curvature bounds for compact spherical bands.
    Our comparison results are in the same spirit as~\cites{cecchini2022scalar,Raede23}.
    \item We then employ the harmonic map heat flow coupled with the Ricci flow to demonstrate that the general case of \cref{thm:sphere} reduces to the situation where all the inequalities are strict, unless the metric $g$ is Einstein with $\Ric_g=3g$. 
    Here, we make use of recent results of Lee and Tam~\cite{LeeTam}, showing that the harmonic map heat flow coupled with the Ricci flow provides appropriate control of the Lipschitz constant with respect to the change of  the scalar curvature under Ricci flow.  
    \item Finally, we prove \cref{thm:sphere} for Einstein manifolds, which follows as a consequence of Bishop's volume comparison theorem.
\end{enumerate}
This approach offers a novel perspective on proving \cref{thm:sphere}, circumventing the difficulties associated with degenerate spherical bands and exploiting powerful tools from geometric analysis.

The remainder of this paper is organized as follows.
\cref{sec:mu-bubbles} reviews relevant results on the existence and properties of $\mu$-bubbles on compact Riemannian bands.
In \cref{sec:scalar_mean}, we prove a comparison result for compact spherical bands with scalar and mean curvature bound.
In \cref{sec:Ricci_flow_Einstein}, we use the harmonic map heat flow coupled with the Ricci flow to show that Theorem \ref{thm:sphere} can be reduced to the case where all inequalities become strict unless the metric $g$ is Einstein.
Finally, in \cref{sec:4-sphere_rigidity} we show that the comparison theorem from \cref{sec:scalar_mean} suffices to rule out strict inequalities from \cref{thm:sphere} and that \cref{thm:sphere} holds for Einstein metrics. This completes the proof of \cref{thm:sphere}.

\subsection*{Acknowledgments.} We would like to thank Alessandro Carlotto, Man-Chun Lee, and Thomas Schick for helpful discussions.

\section{$\mu$-bubbles with mean curvature bound}\label{sec:mu-bubbles}
We review the properties of $\mu$\nobreakdash-bubbles to be utilized in this paper.
For more details, we refer the reader to the work of Gromov~\cite{gromov-four-lectures} and Zhu~\cite{zhu_width_mu}.

Let us start by establishing notation and conventions.
Let $(X,g)$ be an oriented Riemannian manifold.
We stress that, throughout this paper, all manifolds are oriented and connected.
We denote the Ricci curvature tensor by $\Ric_g$, and the scalar curvature by $\scal_g$.
The Riemannian volume form is denoted by $dV_g$. 
For an embedded hypersurface $Z\subset X$, $g_Z$ stands for the restriction of $g$ to $Z$. 
If the boundary $\partial X$ of $X$ is non-empty, $\II_g(\partial X)$ denotes the second fundamental form with respect to the inward unit normal field $\nu$ along $\partial X$, and $\mean_g(\partial X)$ its trace, the mean curvature with respect to $\nu$. 
As per our convention, the boundary of the closed unit ball in $\R^n$ has mean curvature equal to $n-1$.

A \emph{band} is a compact connected manifold with boundary \(V\) together with a decomposition \(\partial V = \partial_- V \sqcup \partial_+ V\), where \(\partial_\pm V\) are non-empty unions of components.
A \emph{proper separating hypersurface} for a band $V$ is a closed embedded hypersurface $\Sigma\subset \interior{V}$ such that no connected component of $V\setminus\Sigma$ contains a path $\gamma\colon [0,1]\to V$ with $\gamma(0)\in\partial_-V$ and $\gamma(1)\in\partial_+V$.
Note that a proper separating hypersurface $\Sigma$ for an orientable band $V$ is also orientable.
Additionally, if $(V,g)$ is a Riemannian band and $\Sigma$ is a proper separating hypersurface for $V$, we denote by $(\hat V,g)$ the Riemannian band isometrically embedded in $(V,g)$ such that $\partial_-\hat V=\partial_-V$ and $\partial_+\hat V=\Sigma$. 
We adopt the convention of denoting by $\II_g(\Sigma)$ and $\mean_g(\Sigma)$ respectively the second fundamental form and the mean curvature on $\Sigma$ regarded as a union of components of $\partial\hat V$.

Let us now turn to $\mu$-bubbles. 
Let $(V,g)$ be an $n$-dimensional  Riemannian band. We denote by $\mathcal C(V)$ the set of all Caccioppoli sets $\hat\Omega$ in $V$ such that $\hat\Omega$ contains an open neighborhood of $\partial_-V$ and $\hat\Omega\cap\partial_+V=\emptyset$. 
For the notion of Caccioppoli sets and their properties, we refer the reader to~\cite{giusti-minimal-surfaces}. 
If $\hat\Omega$ is a smooth Caccioppoli set in $\mathcal C(V)$, then $\partial\hat\Omega\cap\interior V$ is a separating hypersurface for $V$.
For any given smooth function $\mu$ on $V$,  we consider the functional
\begin{equation*}
    \mathcal A_\mu(\hat\Omega)=\mathcal H^{n-1}(\partial^\ast\hat\Omega\cap\interior V)-\int_{\hat\Omega}\mu\,d\mathcal H^n
\end{equation*}
where $\mathcal H^k$ denotes the $k$-dimensional Hausdorff measure on $(V,g)$, and  $\partial^\ast\hat\Omega$ denotes the reduced boundary of $\hat\Omega$.
We say that a Caccioppoli set $\Omega\in\mathcal C(V)$ is a \emph{$\mu$-bubble} if it minimizes the functional $\mathcal A_\mu$, that is, if $\Omega$ satisfies
\[
    \mathcal A_\mu(\Omega)=\inf\left\{\mathcal A_\mu(\hat\Omega)\colon\hat\Omega\in\mathcal C(V)\right\}.
\]

\noindent The next lemma states the existence of $\mu$\nobreakdash-bubbles under strict mean curvature bound.

\begin{lemma}[{\cite[Section 2]{zhu_width_mu}, \cite[Lemma~4.2]{Raede23}}]\label{lem:mu_bubble_existence}
    For $n\leq7$, let $(V,g)$ be an $n$-dimensional Riemannian band and $\mu\colon V\to\R$ a smooth function such that $\mean_g(\partial_\pm V)>\pm\mu|_{\partial_\pm V}$.
    Then there exists a smooth $\mu$-bubble $\Omega$.
\end{lemma}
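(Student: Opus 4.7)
The plan is to construct the $\mu$-bubble via the direct method of the calculus of variations for Caccioppoli sets, as outlined by Gromov and developed in dimension $n\leq 7$ by Zhu. The strict mean curvature inequalities at $\partial_\pm V$ play the role of barriers that keep the minimizer in the interior.

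First, using continuity of $\mu$ and $\mean_g$ together with the strict inequalities $\mean_g(\partial_\pm V)>\pm\mu|_{\partial_\pm V}$, I would choose disjoint closed tubular neighborhoods $N_-$ and $N_+$ of $\partial_-V$ and $\partial_+V$ small enough that the smooth foliations of $N_\pm$ by signed-distance level sets still satisfy the analogous strict mean curvature inequalities. Then I would work in the restricted class $\mathcal{C}'(V)$ of Caccioppoli sets $\hat\Omega\in\mathcal{C}(V)$ with $N_-\subset\hat\Omega\subset V\setminus N_+$. For a minimizing sequence $\hat\Omega_k\in\mathcal{C}'(V)$ of $\mathcal{A}_\mu$, the BV-compactness theorem on the compact manifold $V$ yields a subsequence converging in $L^1(V)$ to some Caccioppoli set $\Omega\in\mathcal{C}'(V)$; lower semicontinuity of the perimeter and $L^1$-continuity of the bulk term together imply that $\Omega$ realizes the infimum of $\mathcal{A}_\mu$ over $\mathcal{C}'(V)$.

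The main obstacle is then to show that the reduced boundary $\partial^*\Omega\cap\interior V$ lies strictly inside $V\setminus(N_-\cup N_+)$, so that $\Omega$ in fact minimizes $\mathcal{A}_\mu$ over the full class $\mathcal{C}(V)$. This is to be achieved by a strong maximum principle argument: at a hypothetical first interior contact between $\partial^*\Omega$ and a leaf of the signed-distance foliation of $N_\pm$, the Euler--Lagrange equation for a $\mu$-bubble would force $\mean_g(\partial^*\Omega)=\mu$ at the contact point, while geometric comparison with the leaf (which locally supports $\partial^*\Omega$ from one side) together with the strict mean curvature bound on the foliation produces the opposite strict inequality on mean curvatures, a contradiction. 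This step is the heart of the argument and is exactly where the strict inequalities in the hypothesis are indispensable.

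Finally, the classical regularity theory for almost-minimizers of the perimeter with smooth bulk potential gives that $\partial^*\Omega\cap\interior V$ is a smooth hypersurface away from a singular set of Hausdorff dimension at most $n-8$. Since $n\leq 7$, this singular set is empty, so $\Omega$ is a smooth $\mu$-bubble as required.
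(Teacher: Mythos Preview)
The paper does not supply its own proof of this lemma; it is quoted directly from \cite[Section~2]{zhu_width_mu} and \cite[Lemma~4.2]{Raede23}. Your outline is essentially the standard argument found in those references: direct minimization among Caccioppoli sets, a barrier argument exploiting the strict mean curvature inequalities to keep the minimizer away from $\partial V$, and the regularity theory for almost-minimizing currents in dimension $n\leq 7$ to rule out singularities.

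One small caution on your barrier step. At a contact point with the artificial obstacle $\partial N_\pm$, the set $\Omega$ is only a \emph{constrained} minimizer in $\mathcal{C}'(V)$, so the Euler--Lagrange identity $\mean_g(\partial^*\Omega)=\mu$ is not immediately available there, and invoking it to set up the maximum-principle contradiction is circular as written. The usual fix is either (a) to replace the maximum-principle phrasing by a direct cut-and-paste comparison, using the divergence theorem on the foliated collars to show $\mathcal{A}_\mu\bigl((\hat\Omega\cup N_-)\setminus N_+\bigr)\leq\mathcal{A}_\mu(\hat\Omega)$ for every $\hat\Omega\in\mathcal{C}(V)$, so that the constrained minimizer is automatically an unconstrained one; or (b) to first modify $\mu$ to diverge to $\mp\infty$ near $\partial_\pm V$ so that no constraint is needed, and then argue that the resulting minimizer also minimizes the original functional. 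With either adjustment your sketch goes through.
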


\noindent In the next lemma we collect the properties of smooth $\mu$\nobreakdash-bubbles that will be used in this paper.

\begin{lemma}[{\cite[Section 2]{zhu_width_mu},\cite[Lemmas~4.3 and 4.4]{Raede23}}]\label{lem:mu_bubble_variation}
    Let $(V,g)$ be an $n$-dimensional Riemannian band, let $\mu\colon V\to\R$ be a smooth function, and let $\Omega$ be a smooth $\mu$-bubble.
    Then $\Sigma\coloneqq\partial\Omega\cap\interior V$ is a proper separating hypersurface for $V$ satisfying
    \begin{equation}\label{eq:first_variation}
        \int_\Sigma\left(\mean_g(\Sigma)-\mu\right)u\,dV_{g_\Sigma}=0       
    \end{equation}
    and
    \begin{multline}\label{eq:second_variation}
        \int_\Sigma\left|\nabla_{g_\Sigma}u\right|^2+\frac{1}{2}\scal_{g_\Sigma}u^2\,dV_{g_\Sigma}\\
        \geq\frac{1}{2}\int_\Sigma \left(\scal_g-\mean_g^2(\Sigma)+\left|\II_g(\Sigma)\right|^2+2\mean_g(\Sigma)\mu+2g(\nabla_g\mu,\nu)\right)u^2\,dV_{g_\Sigma}
    \end{multline}
    for every $u\in C^\infty(\Sigma)$.
\end{lemma}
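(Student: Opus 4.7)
The plan is to derive \eqref{eq:first_variation} and \eqref{eq:second_variation} as the first and second variations of the functional $\mathcal A_\mu$ at the minimizer $\Omega$, and then rewrite the ambient curvature data in terms of the intrinsic scalar curvature of $\Sigma$ via the scalar Gauss equation. Smoothness of $\Omega$ together with $\partial_-V\subset\interior\Omega$ and $\Omega\cap\partial_+V=\emptyset$ forces $\Sigma\coloneqq\partial\Omega\cap\interior V$ to be a smooth closed embedded hypersurface of $\interior V$. Any path $\gamma\colon[0,1]\to V$ joining $\partial_-V$ to $\partial_+V$ starts inside $\Omega$ and ends outside $\Omega$, so it must cross $\partial\Omega$; since the only portion of $\partial\Omega$ lying in $\interior V$ is $\Sigma$, the hypersurface $\Sigma$ is proper separating for $V$.

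For the first variation, given $u\in C^\infty(\Sigma)$, I extend $u$ to a compactly supported smooth function on $\interior V$ and let $\nu$ be a smooth extension of the unit normal to $\Sigma$, oriented consistently with the paper's sign convention. Flowing $\Omega$ by $u\nu$ produces a smooth family $\{\Omega_t\}\subset\mathcal C(V)$ for small $|t|$, and the classical first variation formulas for area and enclosed volume combine to give
\[
\frac{d}{dt}\bigg|_{t=0}\mathcal A_\mu(\Omega_t)=\int_\Sigma\bigl(\mean_g(\Sigma)-\mu\bigr)u\,dV_{g_\Sigma},
\]
which vanishes by minimality of $\Omega$. This proves~\eqref{eq:first_variation}; since $u$ is arbitrary, it also yields the pointwise Euler-Lagrange identity $\mean_g(\Sigma)=\mu$ on $\Sigma$.

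For the second variation, a direct computation of $\tfrac{d^2}{dt^2}|_{t=0}\mathcal A_\mu(\Omega_t)$ together with the minimality inequality $\tfrac{d^2}{dt^2}|_{t=0}\mathcal A_\mu(\Omega_t)\geq0$ yields
\[
\int_\Sigma|\nabla_{g_\Sigma}u|^2\,dV_{g_\Sigma}\geq\int_\Sigma\bigl(|\II_g(\Sigma)|^2+\Ric_g(\nu,\nu)-\mean_g(\Sigma)^2+\mean_g(\Sigma)\mu+g(\nabla_g\mu,\nu)\bigr)u^2\,dV_{g_\Sigma}.
\]
I would then apply the scalar Gauss equation
\[
2\Ric_g(\nu,\nu)=\scal_g-\scal_{g_\Sigma}+\mean_g(\Sigma)^2-|\II_g(\Sigma)|^2
\]
to rewrite $|\II_g(\Sigma)|^2+\Ric_g(\nu,\nu)$ as $\tfrac12(\scal_g-\scal_{g_\Sigma}+\mean_g(\Sigma)^2+|\II_g(\Sigma)|^2)$, substitute this expression into the stability inequality, and transfer $\tfrac12\scal_{g_\Sigma}u^2$ to the left-hand side; after collecting terms, the result is exactly \eqref{eq:second_variation}. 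The main obstacle is the bookkeeping in the second variation of area: one must select the extension of the variational field (for example, via the exponential map from $\Sigma$, so that each point of $\Sigma$ moves along the geodesic with initial velocity $u\nu$) so that the acceleration contributions vanish, leaving only $|\nabla_{g_\Sigma}u|^2$ on the left and the stated ambient data on the right; all subsequent manipulations are algebraic.
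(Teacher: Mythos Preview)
Your proposal is correct and follows exactly the approach the paper indicates: the paper does not supply a detailed proof of this lemma but only remarks that \eqref{eq:first_variation} and \eqref{eq:second_variation} follow from the first and second variation formulas for $\mathcal A_\mu$, citing \cite{zhu_width_mu} and \cite{Raede23}. Your write-up carries out precisely that computation---normal variation, Euler--Lagrange identity $\mean_g(\Sigma)=\mu$, the stability inequality, and then the scalar Gauss equation to pass from $\Ric_g(\nu,\nu)$ to $\scal_{g_\Sigma}$---so there is nothing to add.
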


\begin{remark}
    Since the $\mu$-bubble $\Omega$ minimizes the functional $\mathcal A_\mu$, Equation~\eqref{eq:first_variation} follows from the first variation formula, whereas Inequality~\eqref{eq:second_variation} follows from the second variation formula. See \cite[Section 2]{zhu_width_mu} and also ~\cite[Lemmas~4.3 and 4.4]{Raede23}.
\end{remark}

\section{scalar-mean bounds on spherical bands}\label{sec:scalar_mean}
We prove a comparison theorem for spherical bands with scalar and mean curvature bounds.
Let $I=[t_-,t_+]$ be a compact interval, and let $\phi\colon I\to\R_+$ be a smooth function.
Consider the four-dimensional band $\sph^3_I\coloneqq\sph^3\times I$, equipped with the warped product metric
\[
    g_\phi(p,t)\coloneqq\phi^2(t)g_{\sph^3}(p)+dt^2,\qquad\forall(p,t)\in\sph^3\times I=\sph^3_I.
\]
Let $h_\phi\colon I\to\R$ be the smooth function defined as
\begin{equation}\label{eq:h_phi}
    h_\phi(t)\coloneqq3\frac{d}{dt}\log(\phi(t))=3\frac{\phi^\prime(t)}{\phi(t)},\qquad\forall t\in I.   
\end{equation}
Utilizing the classical formula for the scalar curvature of warped product metrics~\cite[Ch.IV, Formula~(6.15)]{spingeometry}, 
\begin{equation}\label{eq:g_phi_scal}
    \scal_{g_\phi}(p,t)
    =\frac{6}{\phi^2(t)}-\frac{4}{3}h_\phi^2(t)-2h_\phi^\prime(t),\qquad\forall (p,t)\in \sph^3\times I.
\end{equation}
Additionally, $\partial_\pm\sph^3_I=\sph_3\times\{t_\pm\}$ and
\begin{equation}\label{eq:g_phi_mean}
    H_{g_\phi}(\partial_\pm \sph^3_I)=\pm h_\phi(t_\pm).
\end{equation}
We say that a positive smooth function $\phi(t)$ is log-concave if $\log(\phi(t))$ is a concave function.

\begin{proposition}\label{prop:scalar-mean_strict}
    Let $(V,g)$ be a four-dimensional oriented Riemannian band.
    Let $I=[t_-,t_+]$ be a compact interval, and $\phi\colon I\to\R_+$ be a smooth log\nobreakdash-concave function.
    Let $f\colon(V,g)\to(\sph^3_I,g_\phi)$ be a smooth map.
    Suppose
    \begin{enumerate}[label=\textup{(\roman*)}]
        \item $f$ is distance non\nobreakdash-increasing,\label{item:scalar-mean_strict1}
        \item $f(\partial_\pm V)\subseteq\partial_\pm\sph^3_I$\label{item:scalar-mean_strict2},
        \item $\scal_g> f^\ast\scal_{g_\phi}$,\label{item:scalar-mean_strict3}
        \item $\mean_g(\partial_\pm V)>f^\ast\mean_{g_\phi}(\partial_\pm\sph^3_I)$.\label{item:scalar-mean_strict4}
    \end{enumerate}
    Then $f$ has degree zero.
\end{proposition}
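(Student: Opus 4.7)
The plan is to apply \cref{prop:existence_separating} to produce a three-dimensional $\mu$-bubble $\Sigma\subset V$ carrying a spectral lower bound on its scalar curvature, and then derive a contradiction from $\deg f\neq 0$ via a spin-Dirac argument on $\Sigma$ in the spirit of \cref{thm:Listing}.

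\emph{Setup and curvature comparison.}
Decompose $f = (F, s)$ with $s \coloneqq \pi_J \circ f\colon V \to J$ and $F \coloneqq \pi_{\sph^3}\circ f\colon V\to \sph^3$, and set $\mu \coloneqq h_\phi\circ s\in C^\infty(V)$. Hypothesis~\ref{item:scalar-mean_strict2} forces $s|_{\partial_\pm V}\equiv t_\pm$, so $\mu|_{\partial_\pm V} = h_\phi(t_\pm)$; combined with~\ref{item:scalar-mean_strict4} and~\eqref{eq:g_phi_mean} this yields $\mean_g(\partial_\pm V)>\pm\mu|_{\partial_\pm V}$, and \cref{prop:existence_separating} produces a proper separating hypersurface $\Sigma\subset V$ satisfying~\eqref{eq:mu_bubble1}. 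Hypothesis~\ref{item:scalar-mean_strict3} and~\eqref{eq:g_phi_scal} give $\scal_g+\tfrac{4}{3}\mu^2\geq 6/\phi^2(s)-2h_\phi'(s)$. Log-concavity of $\phi$ yields $h_\phi'\leq 0$, while hypothesis~\ref{item:scalar-mean_strict1} together with compactness of $V$ produces a uniform $\delta>0$ with $|\nabla s|_g\leq 1-\delta$ and $|dF|^2_{g_{\sph^3}}\leq (1-\delta)/\phi^2(s)$; consequently $g(\nabla\mu,\nu)=h_\phi'(s)\,g(\nabla s,\nu)\geq h_\phi'(s)$ and $6/\phi^2(s)\geq 6|dF|^2+c$ for some uniform $c>0$. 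Substituting into~\eqref{eq:mu_bubble1} gives, for every $u\in C^\infty(\Sigma)$,
\begin{equation}\label{eq:plan_spec}
\int_\Sigma\!\left(|\nabla_{g_\Sigma} u|^2+\tfrac{1}{2}\bigl(\scal_{g_\Sigma}-6|dF|^2\bigr)u^2\right) dV_{g_\Sigma}\geq \tfrac{c}{2}\int_\Sigma u^2\,dV_{g_\Sigma}.
\end{equation}

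\emph{Degree of $F|_\Sigma$ and spinor contradiction.}
Assume for contradiction $\deg f\neq 0$. Since $f(\partial_\pm V)\subset \partial_\pm\sph^3_J$, the relative degree principle gives $\deg(f|_{\partial_- V}\colon \partial_- V\to\partial_-\sph^3_J)=\deg f$. Stokes' theorem applied to the closed 3-form $F^\ast\omega_{\sph^3}$ on the region bounded by $\partial_- V$ and $\Sigma$, together with the identification $F|_{\partial_- V}=f|_{\partial_- V}$, yields $\deg(F|_\Sigma)=\deg f\neq 0$. Now $\Sigma$ is a closed oriented 3-manifold, hence spin. Let $D^E$ denote the Dirac operator on $\Sigma$ twisted by $F|_\Sigma^\ast \mathcal{S}(\sph^3)$; non-vanishing of $\deg F|_\Sigma$ produces, by the twisted-Dirac/index argument underlying the proof of \cref{thm:Listing}, a non-trivial harmonic section $\psi$. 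The Lichnerowicz--Llarull formula gives
\[
\int_\Sigma\!\left(|\nabla\psi|^2+\tfrac{1}{4}\bigl(\scal_{g_\Sigma}-6|dF|^2\bigr)|\psi|^2\right) dV_{g_\Sigma}\leq 0,
\]
while applying~\eqref{eq:plan_spec} to $u=|\psi|$ together with Kato's inequality $|\nabla|\psi||^2\leq|\nabla\psi|^2$ yields
\[
\int_\Sigma\!\left(|\nabla\psi|^2+\tfrac{1}{2}\bigl(\scal_{g_\Sigma}-6|dF|^2\bigr)|\psi|^2\right) dV_{g_\Sigma}\geq \tfrac{c}{2}\int_\Sigma |\psi|^2\,dV_{g_\Sigma}.
\]
Subtracting twice the former from the latter produces $-\int_\Sigma|\nabla\psi|^2\,dV_{g_\Sigma}\geq \tfrac{c}{2}\int_\Sigma|\psi|^2\,dV_{g_\Sigma}>0$, a contradiction. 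Hence $\deg f=0$.

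\emph{Main obstacle.}
The first paragraph is essentially formal once the right $\mu$ is chosen and the strict gap $6/\phi^2(s)>6|dF|^2$ is extracted from~\ref{item:scalar-mean_strict1}. The delicate point is the second paragraph, where the coefficient $\tfrac{1}{2}$ inherited from the second variation of area in~\eqref{eq:mu_bubble1} has to be bridged to the coefficient $\tfrac{1}{4}$ appearing in the Bochner--Lichnerowicz formula for the twisted Dirac operator. Kato's inequality absorbs precisely this factor-of-two gap; the resulting combination is what functions as the ``version with coefficients of the rigidity of the three-sphere'' alluded to in the introduction, and it is what allows \cref{thm:Listing}-type reasoning on $\Sigma$ to be driven by the merely spectral (rather than pointwise) scalar curvature inequality provided by the $\mu$-bubble.
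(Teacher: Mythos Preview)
Your first paragraph is correct and matches the paper's \cref{lem:separating_hypersurface} and \cref{lem:positive_eigenvalues}: choosing $\mu=h_\phi\circ s$, applying \cref{prop:existence_separating}, and extracting a uniform spectral gap from the strict contraction is exactly what the paper does (the paper phrases the conclusion as positivity of the lowest eigenvalue of $-\Delta_{g_\Sigma}+\tfrac18(\scal_{g_\Sigma}-6|df_\Sigma|^2)$, which is equivalent to your displayed inequality). Your Stokes argument for $\deg(F|_\Sigma)\neq 0$ is likewise fine and parallels \cref{rem:f_Sigma_degree}.

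The second paragraph takes a different route from the paper, and there is a genuine gap. The paper does \emph{not} open up the Dirac argument on $\Sigma$: it takes the positive first eigenfunction $u>0$, makes the conformal change $\bar g=u^4 g_\Sigma$, checks via the conformal scalar curvature formula that $\scal_{\bar g}>6|dF|^2_{\bar g}$ \emph{pointwise}, and then invokes \cref{prop:Listing_3D} as a black box. Your Kato route instead requires a non-trivial harmonic twisted spinor on $\Sigma$ itself. But $\Sigma$ is three-dimensional, so the twisted Dirac operator $D^E$ is self-adjoint with numerical index zero; non-vanishing of $\deg(F|_\Sigma)$ does not by itself force $\ker D^E\neq 0$. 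In Llarull's and Listing's proofs the odd-dimensional case is handled by stabilising to $\Sigma\times S^1$ mapped to $\sph^4$ and letting the circle radius tend to zero, so the harmonic spinor they produce lives on a four-manifold, not on $\Sigma$, and your appeal to ``the index argument underlying \cref{thm:Listing}'' does not deliver the object you need. One can salvage your approach by lifting the spectral inequality to $\Sigma\times S^1$ (immediate via Fourier decomposition in the circle factor) and running Kato there against the even-dimensional harmonic spinor, but this requires tracking Llarull's limiting curvature estimate and is substantially more than what you have written. The paper's conformal route sidesteps the whole issue by reducing to a pointwise inequality and letting \cref{prop:Listing_3D} absorb the odd-dimensional subtleties internally.
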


To prove this proposition, we will construct a separating hypersurface for $V$ with suitable properties forcing the degree of $f$ to be zero.
Let us first introduce some additional notation.
Let $\pr_{\sph^3}\colon \sph^3_I=\sph^3\times I\to\sph^3$ be the projection onto the first factor.
Under the hypotheses of \cref{prop:scalar-mean_strict}, let $Z\subset V$ be a closed hypersurface.
Consider the smooth map
\begin{equation}\label{eq:f_Z}
    f_Z\coloneqq\pr_{\sph^3}\circ f\circ i_Z\colon Z\to\sph^3,
\end{equation}
where $i_Z\colon Z\hookrightarrow V$ denotes the inclusion of $Z$ into $V$.
The next lemma contains well\nobreakdash-known properties of the map $f_Z$, see~\cite[Lemma~6.3]{Raede23}.
We include a brief proof for clarity.

\begin{lemma}\label{lem:f_Sigma_degree}
    Let $V$ be a four-dimensional oriented band, let $I$ be a compact interval, and let $f\colon V\to \sph^3_I$ be a smooth map of non\nobreakdash-zero degree such that $f(\partial_\pm V)\subseteq\partial_\pm\sph^3_I$.
    Suppose $\Sigma$ is a proper separating hypersurface of $V$.
    Then there exists a connected component $Y$ of $\Sigma$ such that $f_Y\colon Y\to\sph^3$ has non\nobreakdash-zero degree.    
\end{lemma}

\begin{proof}
    Let $\alpha$ be a generator of $H^1(\sph^3_I,\partial\sph^3_I;\Z)=\Z$, and let $[\sph^3_I,\partial\sph^3_I]\in H_4(\sph^3_I,\partial\sph^3_I;\Z)$ be the fundamental class.
    Note that 
    \begin{equation}\label{eq:f_Sigma_degree1}
        (\pr_{\sph^3})_\ast([\sph^3_I,\partial\sph^3_I]\frown\alpha)=[\sph^3]
    \end{equation}
    as the Lefschetz dual of $\alpha$ is represented by any submanifold $\sph^3\times\{t\}$, for $t\in I$.
    Since $f(\partial_\pm V)\subseteq\partial_\pm\sph^3_I$ and since $\Sigma$ is a proper separating hypersurface for $V$, there exists a union of components of $\Sigma$, that we denote by $\Sigma^\prime$, such that $[\Sigma^\prime]$ is Lefschetz dual to $f^\ast(\alpha)$.
    Hence, 
    \begin{equation}\label{eq:f_Sigma_degree2}
        f_\ast([\Sigma^\prime])=f_\ast([V,\partial V]\frown f^\ast(\alpha))
        =f_\ast([V,\partial V])\frown\alpha=d[\sph^3_I,\partial\sph^3_I]\frown\alpha
    \end{equation}
    where $d=\deg(f)$ and $[V,\partial V]\in H_4(V,\partial V;\Z)$ is the fundamental class.
    From \eqref{eq:f_Sigma_degree1} and \eqref{eq:f_Sigma_degree2}, we deduce that $(\pr_{\sph^3}\circ f)_\ast([\Sigma^\prime])=d[\sph^3]$.
    Therefore, if $f_Y$ has degree zero for every component $Y$ of $\Sigma$, we must have $d=0$.
\end{proof}

For a smooth map $f\colon (X,g)\to (Y,h)$ of Riemannian manifolds, we denote by $\left\|df_p\right\|$ the operator norm of the linear map $df_p\colon (T_pX,g_p)\to (T_{f(p)}Y,h_p)$, that is, $\left\|df_p\right\|$ is the infimum of the numbers $c>0$ such that $h_{f(p)}(df_pv,df_pv)^{1/2}\leq cg_p(v,v)^{1/2}$, for all $v\in T_pX$.
We denote by $\left\| df\right\|\colon X\to\R_{\geq 0}$ the function whose value at a point $p\in X$ is $\left\|df_p\right\|$.
Note that $f$ is distance non-increasing if and only if $\left\|df_p\right\|\leq1$ for all $p\in X$.

\begin{lemma}\label{lem:separating_hypersurface}
    Assume the same hypotheses as in \cref{prop:scalar-mean_strict}.
    Then there exists a proper separating hypersurface $\Sigma$ for $V$ such that
    \begin{equation}\label{eq:separating_hypersurface}
        \int_\Sigma\left|\nabla_{g_\Sigma}u\right|^2
        +\frac{1}{2}\left(\scal_{g_\Sigma}
        -6\left\|df_\Sigma\right\|^2\right)u^2\,dV_{g_\Sigma}>0
    \end{equation}
    for all non\nobreakdash-zero $u\in C^\infty(\Sigma)$.
\end{lemma}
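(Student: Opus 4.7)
The plan is to apply \cref{prop:existence_separating} with a test function $\mu\colon V\to\R$ designed so that the quadratic form on the right-hand side matches the warped-product geometry of $(\sph^3_J,g_\phi)$. Write $\pi_J\colon\sph^3_J\to J$ and $\pi_{\sph^3}\colon\sph^3_J\to\sph^3$ for the two projections, set $t\coloneqq\pi_J\circ f\colon V\to J$, and let $\mu\coloneqq h_\phi\circ t\in C^\infty(V)$. By hypothesis~\ref{item:scalar-mean_strict2} we have $t|_{\partial_\pm V}\equiv t_\pm$, so~\eqref{eq:g_phi_mean} gives $\mu|_{\partial_\pm V}=h_\phi(t_\pm)=\pm\mean_{g_\phi}(\partial_\pm\sph^3_J)$. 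Combined with~\ref{item:scalar-mean_strict4}, this yields the strict boundary bound $\mean_g(\partial_\pm V)>\pm\mu|_{\partial_\pm V}$ required by \cref{prop:existence_separating}.

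With $n=4$, the proposition then produces a proper separating hypersurface $\Sigma\subset V$ for which
\[
    \int_\Sigma|\nabla_{g_\Sigma}u|^2+\tfrac{1}{2}\scal_{g_\Sigma}u^2\,dV_{g_\Sigma}\geq\tfrac{1}{2}\int_\Sigma\!\left(\scal_g+\tfrac{4}{3}\mu^2+2g(\nabla_g\mu,\nu)\right)u^2\,dV_{g_\Sigma}
\]
for every $u\in C^\infty(\Sigma)$. The heart of the argument is to bound the right-hand integrand strictly below by $3|df_\Sigma|^2$ pointwise on $\Sigma$. Using $d\mu=h_\phi'(t)\,dt$, substituting the scalar curvature formula~\eqref{eq:g_phi_scal} into hypothesis~\ref{item:scalar-mean_strict3}, and cancelling the $\frac{4}{3}h_\phi^2$-terms, one obtains
\[
    \scal_g+\tfrac{4}{3}\mu^2+2g(\nabla_g\mu,\nu)\geq\frac{6}{\phi^2(t)}-2h_\phi'(t)\bigl(1-g(\nabla_g t,\nu)\bigr).
\]
Log-concavity of $\phi$ forces $h_\phi'\leq 0$, while the strict distance-decreasing hypothesis~\ref{item:scalar-mean_strict1} gives $|\nabla_g t|_g<1$ on $V$, so the correction term is non-negative and the right-hand side is bounded below by $6/\phi^2(t)$.

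It remains to compare with $|df_\Sigma|^2$. Since $\pi_{\sph^3}\colon(\sph^3_J,g_\phi)\to(\sph^3,g_{\sph^3})$ scales horizontal vectors by the factor $1/\phi(t)$, and $f$ is strictly $1$-Lipschitz, the composition $f_\Sigma=\pi_{\sph^3}\circ f|_\Sigma$ satisfies $|df_\Sigma|<1/\phi(t)$ pointwise on $\Sigma$. Therefore, for every nonzero $u\in C^\infty(\Sigma)$,
\[
    \int_\Sigma|\nabla_{g_\Sigma}u|^2+\tfrac{1}{2}(\scal_{g_\Sigma}-6|df_\Sigma|^2)u^2\,dV_{g_\Sigma}\geq\int_\Sigma\!\left(\tfrac{3}{\phi^2(t)}-3|df_\Sigma|^2\right)u^2\,dV_{g_\Sigma}>0.
\]

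The delicate point is the cancellation: the choice $\mu=h_\phi\circ t$ is forced by the requirement that the $\frac{n}{n-1}\mu^2=\frac{4}{3}\mu^2$ coming from \cref{prop:existence_separating} exactly cancel the $-\frac{4}{3}h_\phi^2$ contribution to $\scal_{g_\phi}$ in~\eqref{eq:g_phi_scal}. With this cancellation the anisotropic terms of the model disappear and only the conformal fibre-curvature $6/\phi^2$ remains on the right; log-concavity controls the sign of the $h_\phi'$-correction, and the strict Lipschitz hypothesis supplies just enough slack to defeat that remaining term.
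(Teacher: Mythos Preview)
The proposal is correct and follows essentially the same approach as the paper: both choose $\mu = h_\phi \circ \pi_J \circ f$, verify the boundary condition via~\eqref{eq:g_phi_mean} and hypothesis~\ref{item:scalar-mean_strict4}, apply \cref{prop:existence_separating}, and then use log-concavity together with the strict Lipschitz bound to reduce the right-hand side to $6/\phi^2(t)$ and compare with $6|df_\Sigma|^2$. The only minor difference is that you extract the strict inequality from the estimate $|df_\Sigma| < 1/\phi(t)$ rather than from the gradient term $g(\nabla_g\mu,\nu) > h_\phi'(t)$ as in the paper; your placement is arguably cleaner, since it does not require $h_\phi'$ to be strictly negative at the point in question.
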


\begin{proof}
Let $\pr_I\colon \sph^3_I=\sph^3\times I\to I$ be the projection onto the second factor.
Consider the smooth map
\[
    \mu_\phi\coloneqq h_\phi\circ\pr_I\circ f\colon V\to\R,
\]
where $h_\phi$ is the function defined by~\eqref{eq:h_phi}.
By Condition~\ref{item:scalar-mean_strict4} and Equation~\eqref{eq:g_phi_mean}, $\mean_g(\partial_\pm V)>f^\ast\mean_{g_\phi}(\partial_\pm\sph^3_I)=\pm\mu_\phi|_{\partial_\pm V}$.
By \cref{lem:mu_bubble_existence} and \cref{lem:mu_bubble_variation}, there exists a smooth $\mu_\phi$-bubble $\Omega$ for $V$, and $\Sigma\coloneqq \partial\Omega\cap\interior V$ is a closed separating hypersurface for $V$ satisfying~\eqref{eq:first_variation} and ~\eqref{eq:second_variation}. By~\eqref{eq:first_variation}, $\mean_g(\Sigma)=\mu_\phi$.
    Since $\left|\II_g(\Sigma)\right|^2\geq\frac{\mean_g^2(\Sigma)}{n-1}$,
    \[
        -\mean_g(\Sigma)^2+\left|\II_g(\Sigma)\right|^2+2\mean_g(\Sigma)\mu_\phi\geq \mu_\phi^2+\frac{\mu_\phi^2}{n-1}=\frac{n}{n-1}\mu_\phi^2.
    \]
     From Inequality~\eqref{eq:second_variation}, we deduce that    
\begin{equation}\label{eq:separating_hypersurface1}
        \int_\Sigma\left|\nabla_{g_\Sigma} u\right|^2+\frac{1}{2}\scal_{g_\Sigma}u^2\,d V_{g_\Sigma}
        \geq\frac{1}{2}\int_\Sigma\left(\scal_g+\frac{4}{3}\mu_\phi^2+2g(\nabla_g\mu_\phi,\nu)\right)u^2\,d V_{g_\Sigma}
\end{equation}
for all $u\in C^\infty(\Sigma)$.
Let $p\in\Sigma$.
By Condition~\ref{item:scalar-mean_strict1}, $|\nabla_g (\pr_I\circ f)(p)|\leq1$.
Since $\phi$ is log-concave, $h_\phi^\prime(\pr_I(f(p)))\leq0$.
Therefore, 
\begin{equation*}
    h_\phi^\prime(\pr_I(f(p)))
    \leq h_\phi^\prime(\pr_I(f(p)))\left|\nabla_g (\pr_I\circ f)(p)\right|\leq g(\nabla_g\mu_\phi(p),\nu(p)).
\end{equation*}
Thus,
\begin{align}\label{eq:separating_hypersurface2}
    \frac{4}{3}\mu_\phi^2(p)+2g(\nabla_g\mu_\phi(p),\nu(p))
    \geq& \frac{4}{3}h_\phi^2(\pr_I(f(p)))+2h_\phi^\prime(\pr_I(f(p)))\\\notag
    =&\frac{6}{\phi^2(\pr_I(f(p)))}-\scal_{g_\phi}(f(p))
\end{align}
where in the last equality we used~\eqref{eq:g_phi_scal}.
Using the chain rule,
\begin{equation}\label{eq:separating_hypersurface3}
    \left\|(df_\Sigma)_p\right\|^2\leq\frac{1}{\phi^2(\pr_I(f(p)))}.
\end{equation}
Using Condition~\ref{item:scalar-mean_strict3} and Inequalities~\eqref{eq:separating_hypersurface2} and~\eqref{eq:separating_hypersurface3}, we deduce
\begin{equation*}
    \scal_g(p)+\frac{4}{3}\mu_\phi^2(p)+2g\left(\nabla_g\mu_\phi(p),\nu(p)\right)
    >6\left\|(df_\Sigma)_p\right\|^2,\qquad\forall p\in\Sigma.
\end{equation*}
Finally, from the previous inequality and Inequality~\eqref{eq:separating_hypersurface1} we conclude that~\eqref{eq:separating_hypersurface} holds for every non\nobreakdash-zero $u\in C^\infty(\Sigma)$.
\end{proof}

Under the hypotheses of \cref{prop:scalar-mean_strict}, let $Z\subset V$ be a closed hypersurface.
Consider the second-order formally self-adjoint elliptic differential operator
\begin{equation}\label{eq:L_g}
    \mathcal L_{g_Z}
    \coloneqq -\Delta_{g_Z}+\frac{1}{8}\left(\scal_{g_Z}-6\left\|d f_Z\right\|^2\right)
\end{equation}
where $\Delta_{g_Z}$ denotes the Laplace-Beltrami operator of $(Z,g_Z)$ and $f_Z\colon Z\to\sph^3$ is defined by \eqref{eq:f_Z}.
Note that we adopt the sign convention for $\Delta_{g_Z}$ such that 
\[
    -\int_Z\left(\Delta_{g_z}u\right) u\,dV_{g_Z}=\int_Z\left|\nabla_{g_Z}u\right|^2\,dV_{g_Z},\qquad\forall u\in C^\infty(Z).
\]
With this convention, $-\Delta_{g_Z}$ has nonnegative spectrum.
By classical elliptic theory, the spectrum of $\mathcal L_{g_Z}$ is a discrete set bounded from below consisting only of eigenvalues.
Moreover, the eigenfunctions relative to each eigenvalue are smooth.

\begin{lemma}\label{lem:positive_eigenvalues}
    Suppose that
    \begin{equation}\label{eq:positive_eigenvalues}
        \int_Z\left|\nabla_{g_Z}u\right|^2
        +\frac{1}{2}\left(\scal_{g_Z}
        -6\left\|df_Z\right\|^2\right)u^2\,dV_{g_Z}>0
    \end{equation}
    for all non\nobreakdash-zero $u\in C^\infty(Z)$.
    Then the lowest eigenvalue of $\mathcal L_{g_Z}$ is positive.
\end{lemma}

\begin{proof}
    Let $\lambda\in\R$ be an eigenvalue of $\mathcal L_{g_Z}$.
    We will show that $\lambda>0$.
    Let $u\in C^\infty(Z)$ be an eigenfunction corresponding to $\lambda$, that is,
    \[
        \Delta_{g_Z}u+\lambda u=\frac{1}{8}\left(\scal_{g_Z}-6\left\|d f_Z\right\|^2\right)u
    \]
    with $u\not\equiv 0$.
    We have
    \begin{align*}
        \int_Z\left|\nabla_{g_Z}u\right|^2-\lambda u^2\,dV_{g_Z}
        &=-\int_Z\left(\Delta_{g_Z}u+\lambda u\right) u\,dV_{g_Z}\\
        &=-\frac{1}{8}\int_Z\left(\scal_{g_Z}-6\left\|d f_Z\right\|^2\right)u^2\,dV_{g_Z}\\
        &<\frac{1}{4}\int_Z\left|\nabla_{g_Z}u\right|^2\,dV_{g_Z},
    \end{align*}
    where in the last inequality we used~\eqref{eq:positive_eigenvalues}.
    Therefore,
    \begin{equation*}
        \lambda\int_Zu^2\,dV_{g_Z}
        > \frac{3}{4}\int_Z\left|\nabla_{g_Z}u\right|^2\,dV_{g_Z}.
    \end{equation*}
    Since $u\not\equiv 0$, we conclude that $\lambda>0$.
\end{proof}

To prove \cref{prop:scalar-mean_strict}, let us specialize \cref{thm:Listing} to the three-dimensional case.
Since all three-dimensional oriented manifolds are spin, we obtain:

\begin{proposition}\label{prop:Listing_3D}
    Let $(M,g)$ be a three-dimensional closed connected oriented Riemannian manifold.
    If $f\colon (M,g)\to(\sph^3,g_{\sph_3})$ is a smooth map of non\nobreakdash-zero degree such that $\scal_g(p)\geq 6\left\|df_p\right\|^2$ for all $p\in M$, then there exists a constant $c>0$ such that $f\colon(M,c\cdot g)\to (\sph^3,g_{\sph^3})$ is an isometry.
\end{proposition}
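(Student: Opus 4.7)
The plan is to obtain \cref{prop:Listing_3D} as an immediate specialization of Listing's theorem (\cref{thm:Listing}) to the case $n = 3$. With $n = 3$, the curvature hypothesis $\scal_g \geq n(n-1)|df|^2$ of \cref{thm:Listing} becomes exactly $\scal_g \geq 6|df|^2$, which is what is assumed, and the non-zero degree hypothesis is also assumed. Hence the only thing left to check in order to invoke \cref{thm:Listing} is the spin condition on $M$. Once this is verified, \cref{thm:Listing} produces the constant $a > 0$ for which $f\colon (M, a\cdot g)\to (\sph^3, g_{\sph^3})$ is an isometry.

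The verification of the spin condition is where the dimensional assumption is used, and is the only substantive (and essentially classical) step. In dimension three, every closed oriented manifold $M$ is parallelizable, so its tangent bundle is trivial; in particular $w_2(M) = 0$ and $M$ admits a spin structure. One can alternatively argue via Wu's formula: on an orientable three-manifold both $w_1(M)^2$ and $v_2(M)$ vanish, so $w_2(M) = 0$. Either way, the spin hypothesis of \cref{thm:Listing} is automatic once $M$ is oriented and three-dimensional. There is no real obstacle in the proof beyond this observation: the entire content of \cref{prop:Listing_3D} is that the spin assumption required for the Dirac-operator proof of Listing's theorem comes for free in dimension three, which is precisely what makes the $\mu$\nobreakdash-bubble strategy outlined in the introduction viable for four-dimensional targets.
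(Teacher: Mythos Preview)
Your proposal is correct and matches the paper's approach exactly: the paper simply states that \cref{prop:Listing_3D} is the specialization of \cref{thm:Listing} to $n=3$, using that every oriented three-manifold is spin. Your additional remarks on parallelizability and Wu's formula are harmless elaborations of this single observation.
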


\begin{remark}\label{rem:strict-Listing-3D}
    Let $(M,g)$ be a three-dimensional closed connected oriented Riemannian manifold.
    \cref{prop:Listing_3D} implies that there are no smooth maps $f\colon (M,g)\to(\sph^3,g_{\sph_3})$ of non\nobreakdash-zero degree such that $\scal_g(p)>6\left\|df_p\right\|^2$ for all $p\in M$. In fact, we will only use this consequence in the proof of \cref{thm:sphere}.
\end{remark}
We are now ready to prove \cref{prop:scalar-mean_strict}.
We proceed in a similar way as in~\cite{schoen-yau-psc-manifolds}.
Under the hypotheses of \cref{prop:scalar-mean_strict}, we use the spectral information from \cref{lem:positive_eigenvalues} to make a conformal change of the metric on (a suitable component of) the three-dimensional $\mu$\nobreakdash-bubble in such a way that the new metric would contradict \cref{prop:Listing_3D} if $f$ had non\nobreakdash-zero degree.

\begin{proof}[Proof of \cref{prop:scalar-mean_strict}]
    Suppose, by contradiction, that $f$ has non\nobreakdash-zero degree.
    Using \cref{lem:f_Sigma_degree,lem:separating_hypersurface,lem:positive_eigenvalues}, we choose a closed connected oriented hypersurface $Y$ embedded in $V$ such that
    \begin{itemize}[label=$\vartriangleright$]
        \item $f_Y\colon Y\to\sph^3$ has non\nobreakdash-zero degree, where $f_Y$ is the function defined by~\eqref{eq:f_Z};
        \item the lowest  eigenvalue $\lambda$ of $\mathcal L_{g_Y}$ is positive, where $g_Y$ denotes the restriction of $g$ to $Y$, and where $\mathcal L_{g_Y}$ is the operator defined by~\eqref{eq:L_g}.
    \end{itemize}
    Let $u$ be an eigenfunction relative to $\lambda$, that is, $0\not\equiv u\in C^\infty(Y)$ and satisfies
    \begin{equation}\label{eq:conformal1}
        -\Delta_{g_Y}u+\frac{1}{8}\scal_{g_Y}u
        =\frac{3}{4}\left\|d f_Y\right\|^2u+\lambda u.
    \end{equation}
    By classical elliptic theory, $u$ doesn't change sign.
    We can and we will assume that $u$ is strictly positive.
    Consider the conformal metric
    \begin{equation*}
        \bar g_Y\coloneqq u^4\cdot g_Y.
    \end{equation*}
    The classical formula for the scalar curvature under conformal change~\cite[Corollary~1.161]{besse-einstein-manifolds} gives
    \begin{equation}\label{eq:conformal2}
        \scal_{\bar g_Y}=8\cdot u^{-5}\left(-\Delta_{g_Y}u+\frac{1}{8}\scal_{g_Y} u\right).
    \end{equation}
    We now compare the operator norms of $d f_Y$ with respect to the metrics $g_Y$ and $\bar g_Y$.
    Correspondingly, we use the notation $\left\|df_Y\right\|_{g_Y}$ and $\left\|df_Y\right\|_{\bar g_Y}$.
    A direct calculation shows that
    \begin{equation}\label{eq:conformal3}
        \left\|df_Y\right\|^2_{\bar g_Y}=u^{-4}\left\|df_Y\right\|^2_{g_Y}.
    \end{equation}
    Since $u$ is postive, from \cref{eq:conformal1,eq:conformal2,eq:conformal3} we deduce
    \begin{align*}
        \scal_{\bar g_Y}=6u^{-4}\left\|df_Y\right\|^2_{g_Y}+8\lambda u^{-4}
        >6u^{-4}\left\|df_Y\right\|^2_{g_Y}=6\left\|df_Y\right\|^2_{\bar g_Y}.
    \end{align*}
    Hence, we constructed a three-dimensional closed oriented Riemannian manifold $(Y,\bar g_Y)$ and a smooth map $f_Y\colon (Y,\bar g_Y)\to(\sph^3,g_{\sph^3})$ of non\nobreakdash-zero degree satisfying $\scal_{\bar g_Y}>6\left\|df_Y\right\|^2_{\bar g_Y}$.
    By \cref{rem:strict-Listing-3D}, this contradicts \cref{prop:Listing_3D}.
\end{proof}

\section{Ricci flow, harmonic map heat flow, and Einstein metrics}\label{sec:Ricci_flow_Einstein}
We employ estimates by Lee and Tam~\cite{LeeTam} on the harmonic map heat flow coupled with the Ricci flow to demonstrate that, under the hypotheses of \cref{thm:sphere}, if the metric $g$ is non-Einstein, then there exists a metric $\tilde{g}$ and a function $\tilde{f}$ satisfying the hypotheses of \cref{thm:sphere} with all inequalities being strict.
For a comprehensive overview of the Ricci flow and the harmonic map heat flow, we recommend referring to~\cite{topping-lectures-ricci-flow} and~\cite{lin-wang-harmonic-maps}, respectively. 
Recall that a metric $g$ is Einstein if $\Ric_g=c g$ for some constant $c$, known as the proportionality constant of $g$.
For a smooth map $f\colon(M,g)\to(N,h)$ of Riemannian manifolds, we denote by $\Lip(f)$ the Lipschitz constant of $f$.

\begin{proposition}\label{prop:Ricci_bump_up}
Let $(M,g)$ be an $n$-dimensional closed Riemannian manifold with $\scal_g\geq n(n-1)$, and let $f\colon (M,g)\to(\sph^n,g_{\sph^n})$ be a smooth, distance non\nobreakdash-increasing map of non-zero degree. If the metric $g$ is non-Einstein, then there exists a Riemannian metric $\tilde{g}$ on $M$ and a smooth map $\tilde{f}\colon (M,\tilde{g})\to(\sph^n,g_{\sph^n})$ of non-zero degree such that $\scal_{\tilde{g}}>n(n-1)$ and $\Lip(\tilde{f})<1$.
\end{proposition}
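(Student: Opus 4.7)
The plan is to deform $(g, f)$ simultaneously by Hamilton's Ricci flow $\partial_t g_t = -2\Ric_{g_t}$ on $M$ coupled with the harmonic map heat flow $\partial_t f_t = \tau_{g_t}(f_t)$ from $(M, g_t)$ to $(\sph^n, g_{\sph^n})$, starting at $(g_0, f_0) = (g, f)$, and then to rescale the resulting pair by a constant factor at a small positive time. Both flows exist smoothly on a common interval $[0, T)$ since $M$ is closed and the target is compact and complete. The crucial feature of the coupling, systematically exploited by Lee and Tam, is that in the Bochner-type evolution of $|df_t|^2_{g_t}$ the source Ricci term arising from the pure harmonic map heat flow cancels against the $\partial_t g_t$ contribution, leaving an evolution driven only by the target curvature; since the target is $\sph^n$ this term is under control.

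For the scalar curvature, along the Ricci flow
\[
    \partial_t \scal_{g_t} \;=\; \Delta_{g_t} \scal_{g_t} + 2|\Ric_{g_t}|^2 \;\geq\; \Delta_{g_t} \scal_{g_t} + \tfrac{2}{n}\,\scal_{g_t}^2,
\]
the last step using the pointwise Cauchy--Schwarz bound $|\Ric|^2 \geq \scal^2/n$, which is an equality exactly at Einstein points. Comparison via the parabolic maximum principle with the ODE $\dot y = (2/n)y^2$, $y(0) = n(n-1)$, gives $\scal_{g_t} \geq n(n-1)/(1 - 2(n-1)t)$. In particular $\scal_{g_t} > n(n-1)$ strictly for every $t \in (0, T_0)$ with $T_0 \coloneqq \min\{T, 1/(2(n-1))\}$.

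The main obstacle is to control the Lipschitz constant of $f_t$ under the coupled flow, since positive target curvature makes $|df_t|^2$ a priori not subject to a one-sided maximum principle. The Einstein worst-case model $(\sph^n, g_{\sph^n}, \id)$, in which the coupled system reduces to the shrinking rescaling $g_t = (1-2(n-1)t)g_{\sph^n}$ with $f_t \equiv \id$, makes this transparent: here $\Lip(f_t) = (1-2(n-1)t)^{-1/2} > 1$ for $t > 0$, so the distance non-increasing condition is \emph{not} preserved in general. The non-Einstein hypothesis on $g$ is precisely what rules this out, and the technical heart of the proof is to invoke Lee--Tam's coupled-flow estimates~\cite{LeeTam} to conclude that, for some small $t > 0$,
\[
    \Lip(f_t)^2 \cdot n(n-1) \;<\; \min_M \scal_{g_t}.
\]

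Granted this, a rescaling finishes the proof. Pick any constant $c$ with $\Lip(f_t)^2 < c < \min_M \scal_{g_t}/(n(n-1))$, and define $\tilde g \coloneqq c\, g_t$ and $\tilde f \coloneqq f_t$. Then $\scal_{\tilde g} = c^{-1}\scal_{g_t} > n(n-1)$ pointwise on $M$ and $\Lip(\tilde f) = c^{-1/2}\Lip(f_t) < 1$. Moreover $\tilde f$ is smoothly homotopic to $f$ through the family $\{f_s\}_{s\in[0,t]}$, so $\deg \tilde f = \deg f \neq 0$, producing the desired pair $(\tilde g, \tilde f)$.
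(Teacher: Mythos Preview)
Your approach is exactly the one the paper takes: run the Ricci flow on $M$ coupled with the harmonic map heat flow into $\sph^n$, use the Lee--Tam Lipschitz estimate together with the scalar curvature evolution, and then rescale by a constant. The final rescaling step is correct and matches the paper's.

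There is, however, one point where your exposition obscures the logic. You write that the non-Einstein hypothesis ``rules out'' the bad Lipschitz behaviour and that invoking Lee--Tam then yields $\Lip(f_t)^2\, n(n-1) < \min_M \scal_{g_t}$. In fact the Lee--Tam estimate gives
\[
   \Lip(f_t)^2 \;\le\; \frac{1}{1-2(n-1)t}
\]
\emph{regardless} of whether $g$ is Einstein; the sphere example you describe saturates this bound. The non-Einstein hypothesis does not enter on the Lipschitz side at all. It enters only on the scalar curvature side: since $|\Ric_g|^2 > \scal_g^2/n$ at some point of $M$, the differential inequality $(\partial_t - \Delta_{g_t})\scal_{g_t} \ge (2/n)\scal_{g_t}^2$ is strict at $t=0$ somewhere, and the \emph{strong} maximum principle upgrades your weak bound to
\[
   \scal_{g_t} \;>\; \frac{n(n-1)}{1-2(n-1)t} \qquad \text{for all } t\in(0,T).
\]
This is precisely the content of the paper's Lemma~4.2. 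Combining this strict inequality with the Lee--Tam bound then gives the key inequality $\Lip(f_t)^2\, n(n-1) < \min_M \scal_{g_t}$ that your rescaling needs. Your earlier observation that $\scal_{g_t} > n(n-1)$ for $t>0$ is correct but does not use the non-Einstein hypothesis and is by itself not enough, since the Lipschitz constant may exceed $1$. Once you make the role of the strong maximum principle explicit, the argument is complete and coincides with the paper's.
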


Recall that a Ricci flow on a manifold $M$ is a smooth family of Riemannian metrics $(g_t)_{t\in[0,T]}$ on $M$ satisfying the Ricci equation
\begin{equation*}
    \partial_tg_t=-2\Ric_{g_t}.
\end{equation*}
The short-time existence and primary properties of the Ricci flow were established by Hamilton in his seminal work~\cite{hamilton-three-manifolds}. 
The following lemma summarizes well\nobreakdash-known properties of the Ricci flow utilized in this paper. 
We include a brief proof for clarity.

\begin{lemma}\label{lem:Ricci_lower_bound}
    Let $(M,g)$ be an $n$\nobreakdash-dimensional closed Riemannian manifold with $\scal_g\geq n(n-1)$.
    If $(g_t)_{t\in[0,T]}$ is a Ricci flow on $M$ such that $g_0=g$, then
    \begin{equation}\label{eq:Ricci_lower_bound}
        \scal_{g_t} \geq \frac{n(n-1)}{1-2(n-1)t},\qquad\forall t\in[0,T].
    \end{equation}
    Moreover, the previous inequality is strict for $t\in(0,T]$, unless $g$ is an Einstein metric with $\Ric_g=(n-1)g$.
\end{lemma}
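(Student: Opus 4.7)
The plan is to apply the parabolic maximum principle to the evolution equation of scalar curvature under Ricci flow. Under \eqref{eq:Ricc_equation}, the scalar curvature satisfies
\[
\partial_t\scal_{g_t}=\Delta_{g_t}\scal_{g_t}+2|\Ric_{g_t}|_{g_t}^2,
\]
and the pointwise Cauchy--Schwarz inequality gives $|\Ric_{g_t}|_{g_t}^2\geq\tfrac{1}{n}\scal_{g_t}^2$, with equality at $x$ precisely when $\Ric_{g_t}(x)=\tfrac{\scal_{g_t}(x)}{n}g_t(x)$. Combining these yields the scalar differential inequality
\[
\partial_t\scal_{g_t}\geq\Delta_{g_t}\scal_{g_t}+\tfrac{2}{n}\scal_{g_t}^2.
\]

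Next I would introduce the barrier $\phi(t)\coloneqq\frac{n(n-1)}{1-2(n-1)t}$, which is the unique solution of the ODE $\phi'=\tfrac{2}{n}\phi^2$ with $\phi(0)=n(n-1)$, and set $w(x,t)\coloneqq\scal_{g_t}(x)-\phi(t)$. A short computation gives
\[
\partial_t w\geq\Delta_{g_t}w+\tfrac{2}{n}\bigl(\scal_{g_t}+\phi(t)\bigr)w.
\]
Since $w(\cdot,0)\geq 0$ by hypothesis and $M$ is closed, the weak parabolic maximum principle applied on each compact subinterval of $[0,T)$ yields $w\geq 0$ throughout $M\times[0,T)$, which is exactly \eqref{eq:Ricci_lower_bound}.

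For the rigidity statement, suppose $w(x_0,t_0)=0$ for some $t_0\in(0,T)$. The strong maximum principle applied to the linear parabolic differential inequality for $w$ then forces $w\equiv 0$ on $M\times[0,t_0]$, and tracing backwards through the estimates forces saturation of Cauchy--Schwarz on this set, i.e.\ $\Ric_{g_t}=\tfrac{\scal_{g_t}}{n}g_t$ throughout $M\times[0,t_0]$. Specializing to $t=0$ together with $\scal_g=\phi(0)=n(n-1)$ yields $\Ric_g=(n-1)g$, so that whenever $g$ fails to be Einstein with this proportionality constant the inequality \eqref{eq:Ricci_lower_bound} is strict on $M\times(0,T)$. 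The main technical delicacy I expect to watch is the invocation of the strong maximum principle against a time-varying background metric $g_t$, but this is a standard ingredient in Ricci flow arguments going back to Hamilton and causes no real difficulty here, since the flow is smooth on compact time intervals and $M$ is compact.
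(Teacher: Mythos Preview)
Your proof is correct and follows essentially the same approach as the paper: both use the evolution equation for scalar curvature together with $|\Ric|^2\geq\tfrac{1}{n}\scal^2$, apply the weak maximum/minimum principle against the ODE barrier $\phi(t)=\tfrac{n(n-1)}{1-2(n-1)t}$, and then invoke the strong maximum principle to deduce the Einstein condition in the equality case. Your write-up is slightly more explicit about the barrier function and the linear inequality for $w$, but the argument is the same.
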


\begin{proof}
Recall the orthogonal decomposition
\[
    \Ric_{g_t}=\mathring\Ric_{g_t}+\frac{\scal_{g_t}}{n}g_t,
\]
where $\mathring\Ric_{g_t}$ is the traceless component of the Ricci tensor of $g_t$.
It is well-known that the scalar curvature $\scal_{g_t}$ evolves according to the equation
\begin{equation}\label{eq:Ricci_lower_bound1}
    \left(\partial_t-\Delta_{g_t}\right)\scal_{g_t}
    =2|\Ric_{g_t}|^2
    =2|\mathring\Ric_{g_t}|^2+2\frac{\scal_{g_t}^2}{n}
    \geq 2\frac{\scal_{g_t}^2}{n},
\end{equation}
see~\cite[Section~2.5]{topping-lectures-ricci-flow}.
Here, $\Delta_{g_t}$ denotes the Laplace-Beltrami operator of $(M,g_t)$.
From~\eqref{eq:Ricci_lower_bound1}, $\scal_{g_t}$ satisfies
\[
        \left(\partial_t-\Delta_{g_t}\right)\scal_{g_t}\geq2\frac{\scal_{g_t}^2}{n}.
\]
The function $\psi(t)=n(n-1)/(1-2(n-1)t)$ solves
\[
    \begin{cases}
    \psi^\prime(t)=2\frac{\psi^2(t)}{n}\\
    \psi(0)=n(n-1).
    \end{cases}
\]
Since $\scal_{g_0}=\scal_g\geq n(n-1)=\psi(0)$, from the weak minimum principle~\cite[Theorem 3.1.1]{topping-lectures-ricci-flow} we deduce Inequality~\eqref{eq:Ricci_lower_bound}.
To prove the last assertion, suppose Inequality~\eqref{eq:Ricci_lower_bound} is an equality for some $(p,T_0)\in M\times(0,T]$.
By the strong minimum principle, 
\[
    \scal_{g_t}=\psi(t)=\frac{n(n-1)}{1-2(n-1)t},\qquad\forall t\in[0,T_0].
\]
See for example \cite[Theorem 2.1.1]{mantegazza-mean-curvature-flow}.
Hence, $\partial_t\scal_{g_t}=2\scal_{g_t}^2/n$, $\Delta_{g_t}\scal_{g_t}=0$, and the inequality in~\eqref{eq:Ricci_lower_bound1} is an equality.
Thus,
\[
    0=\mathring\Ric_{g_t}=\Ric_{g_t}-\frac{\scal_{g_t}}{n}g_t=\Ric_{g_t}-\frac{\psi(t)}{n}g_t,\qquad \forall t\in[0,T_0].
\]
This shows that $g_t$ is an Einstein metric with proportionality constant $\scal_{g_t}/n=\psi(t)/n$ for every $t\in[0,T_0]$.
In particular, $g=g_0$ satisfies $\Ric_g=(n-1)g$.
\end{proof}

By utilizing estimates of Lee and Tam~\cite{LeeTam} for the harmonic map heat flow coupled with the Ricci flow, we derive the following lemma.

\begin{lemma}\label{lem:harmonic_heat_flow}
    Let $(M,g)$ be an $n$-dimensional closed Riemannian manifold, and let $f: (M,g) \to (\sph^n,g_{\sph^n})$ be a smooth, distance non\nobreakdash-increasing map.
    Let $(g_t)_{t\in[0,T]}$ be a Ricci flow on $M$ such that $g_0=g$.
    Then there exists $T_1\in (0,T]$ and a smooth family of smooth maps $f_t\colon(M,g_t)\to(\sph^n,g_{\sph^n})$, for $t\in[0,T_1]$, such that $f_0=f$ and
    \begin{equation}\label{eq:lipschitz-f_t}
        \Lip(f_t)\leq\frac{1}{1-2(n-1)t},\qquad\forall t \in [0,T_1].
    \end{equation}
\end{lemma}

\begin{proof}
By~\cite[Theorem~1.1]{huang-tam-short-time-existence}, there exists $T_1\in (0,T]$ and a smooth family of smooth maps $f_t\colon(M,g_t)\to(\sph^n,g_{\sph^n})$, for $t\in[0,T_1]$, such that
\[
    \begin{cases}
        \partial_tf_t=\tau(f_t)\\
        f_0=f
    \end{cases}
\]
where $\tau(f_t)$ is the tension field of the map $f_t\colon(M,g_t)\to(\sph^n,g_{\sph^n})$.
For the definition of the tension field, we refer to~\cite[Section~4.1]{huang-tam-short-time-existence}.
By applying \cite[Theorem~2.1]{LeeTam} with $k=-\Ric_{g_t}$ and $\kappa=1$, we conclude that each $f_t$ satisfies~\eqref{eq:lipschitz-f_t}.
\end{proof}

\begin{proof}[Proof of \cref{prop:Ricci_bump_up}]
Suppose $g$ is not an Einstein metric with proportionality constant $(n-1)$.
Let $(g_t)_{t\in[0,T]}$ be a Ricci flow on $M$ such that $g_0=g$.
By \cref{lem:Ricci_lower_bound},
\[
    \scal_{g_t}>\frac{n(n-1)}{1-2(n-1)t},\qquad\forall t\in(0,T].
\]
By \cref{lem:harmonic_heat_flow}, there exist $T_1\in(0,T]$ and a smooth family of smooth maps $f_t\colon(M,g_t)\to(\sph^n,g_{\sph^n})$, for $t\in[0,T_1]$, such that
\[
    \Lip(f_t)\leq\frac{1}{1-2(n-1)t},\qquad\forall t \in [0,T_1].
\]
Since $f$ has non\nobreakdash-zero degree, each $f_t$ has non\nobreakdash-zero degree as well.
Let $T_2\in(0,T_1]$ be fixed.
Let $\epsilon>0$ be such that $c_\epsilon\coloneqq 1-2(n-1)T_2-\epsilon>0$.
By taking $\epsilon$ sufficiently small, the metric $c_\epsilon^{-1} g_{T_2}$ possesses the desired properties.
\end{proof}

\section{Rigidity of the four-dimensional sphere}\label{sec:4-sphere_rigidity}

We combine the results from \cref{sec:scalar_mean,sec:Ricci_flow_Einstein} to establish the rigidity properties stated in \cref{thm:sphere}.
First, we utilize \cref{prop:scalar-mean_strict} to address the case where all the inequalities in \cref{thm:sphere} are strict.

\begin{lemma}\label{lem:strict_inequalities}
    Let $(M,g)$ be a four-dimensional closed oriented Riemannian manifold with $\scal_g>12$.
    If $f\colon (M,g)\to(\sph^4,g_{\sph^4})$ is a strictly distance-decreasing smooth map,
    then $f$ has degree zero.
\end{lemma}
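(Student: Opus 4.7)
The plan is to argue by contradiction, assuming $\deg(f)\neq 0$. By compactness of $M$ and the pointwise bound $|df|_g<1$ there is $c\in(0,1)$ with $|df|_g\leq c$. Using Sard's theorem I first pick a pair of antipodal regular values $p_\pm\in\sph^4$ of $f$, so that $f^{-1}(\{p_\pm\})=\{p_1^\pm,\dots,p_{k_\pm}^\pm\}$ is finite and every $df_{p_\ell^\pm}$ is invertible. I then identify $\sph^4\setminus\{p_\pm\}$ with the warped product $\sph^3\times(-\pi/2,\pi/2)$ with $\phi(t)=\cos t$, and write $\pi_{\sph^3}$, $\pi_t$ for the two projections.

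Rather than using a level set of $\pi_t\circ f$, whose boundary mean curvature is hard to control when $df$ is anisotropic at pole preimages, I would build the band by excising linking balls. For small $r>0$, set
\[
    V \coloneqq M\setminus\Bigl(\bigcup_\ell B_r(p_\ell^+)\cup\bigcup_\ell B_r(p_\ell^-)\Bigr),
\]
viewed as an oriented Riemannian band with $\partial_\pm V=\bigsqcup_\ell\partial B_r(p_\ell^\pm)$, and let $\mu\coloneqq h_\phi\circ\pi_t\circ f$ where $h_\phi(t)=-3\tan t$. Each geodesic sphere $\partial B_r(p_\ell^\pm)$ has mean curvature $-3/r+O(r)$ with respect to the inward normal of $V$, while the Lipschitz bound $d_{\sph^4}(f(q),p_\pm)\leq cr$ on $\partial B_r(p_\ell^\pm)$ forces $|\mu|\geq 3\cot(cr)$ there, with the sign of $\mu$ correspondingly $-$ on $\partial_+V$ and $+$ on $\partial_-V$. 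Combining with the elementary inequality $\cot(cr)>1/r$ for small $r$ (which uses $c<1$) yields the strict hypothesis $\mean_g(\partial_\pm V)>\pm\mu|_{\partial_\pm V}$ of \cref{lem:mu_bubble_existence}, producing a smooth $\mu$-bubble $\Omega\subset V$ with boundary $\Sigma\coloneqq\partial\Omega\cap\interior V$, a proper separating hypersurface.

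I would then follow the proofs of \cref{lem:separating_hypersurface} and \cref{lem:positive_eigenvalues} essentially verbatim: strict log-concavity $h_\phi'=-3\sec^2 t<0$ combined with $|\nabla_g(\pi_t\circ f)|_g<1$ gives $h_\phi'<g(\nabla_g\mu,\nu)$ on $\Sigma$, whence $\tfrac{4}{3}\mu^2+2g(\nabla_g\mu,\nu)>\tfrac{6}{\phi^2}-\scal_{g_\phi}$; combined with $\scal_g\geq\scal_{g_\phi}=12$ and $|df_\Sigma|^2\leq 1/\phi^2$ for $f_\Sigma\coloneqq\pi_{\sph^3}\circ f|_\Sigma$, this gives the strict pointwise inequality $\scal_g+\tfrac{4}{3}\mu^2+2g(\nabla_g\mu,\nu)>6|df_\Sigma|^2$ on $\Sigma$. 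Inserted into the $\mu$-bubble's second-variation inequality, this shows the lowest eigenvalue of $\mathcal L_{g_\Sigma}$ is strictly positive. Performing the conformal change $\bar g_Y=u_Y^4 g_Y$ on a suitable component $Y$ of $\Sigma$ (with $u_Y$ a positive first eigenfunction), just as in the proof of \cref{prop:scalar-mean_strict}, produces a smooth map $f_Y\colon (Y,\bar g_Y)\to(\sph^3,g_{\sph^3})$ with $\scal_{\bar g_Y}>6|df_Y|^2_{\bar g_Y}$; provided $\deg(f_Y)\neq 0$, this contradicts \cref{prop:Listing_3D}.

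The step I expect to require the most care is finding a component $Y$ with $\deg(f_Y)\neq 0$, since the band version of Rade's lemma invoked in \cref{rem:f_Sigma_degree} presumes a map into a spherical band with matched boundary, which is not our setup. My plan is a direct homological argument: because $\Sigma$ separates $V$, one has $[\Sigma]=\pm[\partial_+V]$ in $H_3(V;\mathbb{Z})$, and the smooth map $\bar f\coloneqq\pi_{\sph^3}\circ f\colon V\to\sph^3$ yields $\deg(\bar f|_\Sigma)=\pm\deg(\bar f|_{\partial_+V})$. On each $\partial B_r(p_\ell^+)$ the map $\bar f$ agrees, to leading order in $r$, with the Gauss-type map $x\mapsto df_{p_\ell^+}(x)/|df_{p_\ell^+}(x)|$, whose degree equals $\operatorname{sgn}(\det df_{p_\ell^+})$; summing and using the regular-value description of $\deg(f)$ at $p_+$ gives $\deg(\bar f|_{\partial_+V})=\deg(f)\neq 0$, so $\deg(\bar f|_\Sigma)\neq 0$ and some connected component $Y$ of $\Sigma$ delivers the contradiction.
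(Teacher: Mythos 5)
Your proof is correct, but it reaches \cref{prop:Listing_3D} by a genuinely different reduction than the paper's. The paper keeps the target compact: it sets $V=f^{-1}(\sph^3_L)$, so that $f(\partial_\pm V)\subseteq\partial_\pm\sph^3_L$, and then post-composes with the stretch $h_l\colon\sph^3_L\to\sph^3_l$ with $l$ close to $\pi/2$; since $-3\tan(l)\to-\infty$ while $\mean_g(\partial_\pm V)$ is a fixed bounded quantity, the mean-curvature hypothesis of \cref{prop:scalar-mean_strict} is achieved and that proposition --- including the degree transfer of \cref{rem:f_Sigma_degree} --- is cited as a black box. You instead excise linking balls and obtain the boundary inequality from the competition between $-3/r+O(r)$ and $-3\cot(cr)$; this works, but the precise point is that $\cot(cr)-1/r=\tfrac{1-c}{cr}+O(r)\to+\infty$ dominates the $O(r)$ error in the geodesic-sphere expansion, which is worth stating since $\cot(cr)>1/r$ alone does not beat the error term. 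Because hypothesis \ref{item:scalar-mean_strict2} of \cref{prop:scalar-mean_strict} fails for your band, you must indeed rerun its interior by hand, and you correctly isolate the degree transfer as the only new ingredient. Your homological replacement is sound: $\Omega$ is a compact manifold with $\partial\Omega=\partial_-V\sqcup\Sigma$, so $[\Sigma]=\pm[\partial_+V]$ in $H_3(V;\mathbb{Z})$ and degrees of maps to $\sph^3$ depend only on this class; and $\deg(\bar f|_{\partial_+V})=\pm\deg(f)\neq0$ by the regular-value formula, up to the universal sign relating the orientation of a small linking sphere of $p_+$ to the generator of $H_3(\sph^4\setminus\{p_\pm\})$ --- irrelevant since only non-vanishing is needed. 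Two small points you should record explicitly: $f(V)$ is compactly contained in $\sph^4\setminus\{p_\pm\}$, so $\mu$ is smooth and bounded up to $\partial V$; and $V$ remains connected after removing finitely many small balls, as \cref{lem:mu_bubble_existence} and \cref{prop:existence_separating} require. What your route buys is independence from the boundary-matching condition and from R\"ade's lemma; what the paper's route buys is that \cref{prop:scalar-mean_strict} can be applied verbatim.
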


Before presenting the proof, we introduce some additional notation. 
For $\ell\in(0,\pi/2)$, consider the spherical band 
\[
    (\sph^3_\ell,g_{\cos})=\left(\sph^3\times[-\ell,\ell],\cos^2(t)g_{\sph^3}+dt^2\right).
\]
Let $p_\pm$ be antipodal points in $\sph^4$.
Utilizing the identification $(\sph^4\setminus\{p_\pm\},g_{\sph^4})\cong(\sph^3\times(-\pi/2,\pi/2),\cos^2(t)g_{\sph^3}+dt^2)$, we view $(\sph^3_\ell,g_{\cos})$ as a Riemannian band isometrically embedded in $(\sph^4\setminus\{p_\pm\},g_{\sph^4})$.

\begin{proof}[Proof of \cref{lem:strict_inequalities}]
    Suppose, for the sake of contradiction, that $f$ has non\nobreakdash-zero degree.
    Let $\delta\in(0,1)$ be such that $\Lip(f)\leq 1-\delta$.
    Since the antipodal map on $\sph^4$ is an isometry, using the Brown-Sard theorem, we choose two antipodal points $p_\pm$ in $\sph^4$ that are regular values of $f$.
    For $\ell\in (0,\pi/2)$, we consider the spherical band $(\sph^3_\ell,g_{\cos})$ isometrically embedded in $(\sph^4\setminus\{p_\pm\},g_{\sph^4})$.
    Since $M$ is compact, $f^{-1}(\{\pm p\})$ is a finite set.
    Since $p_+$ is a regular value of $f$, we choose an open neighborhood $U_+$ of $p_+$ such that each component of $f^{-1}(U_+)$ contains only one point in $f^{-1}(p_+)$ and $f$ is a diffeomorphism when restricted to every component of $f^{-1}(U_+)$.
    We choose an open neighborhood $U_-$ of $p_-$ with similar properties as $U_+$, such that $U_-\cap U_+=\emptyset$.
    Additionally, we choose $L\in (\pi/(2+2\delta),\pi/2)$ such that $\partial_\pm\sph^3_L\subset U_\pm$.
    With this choice, $V=f^{-1}(\sph^3_L)$ is a four-dimensional oriented band with $f^{-1}(\partial_\pm\sph^3_L)=\partial_\pm V$.
    Furthermore, the restriction $f_V=f|_V\colon V\to\sph^3_L$ is a smooth map of non\nobreakdash-zero degree.
    We choose $\ell\in (L,\pi/2)$ such that
    \begin{equation*}
        \mean(\partial_\pm V)> -3\tan(\ell)=H_{g_\phi}(\partial_\pm\sph^3_\ell).
    \end{equation*}
    Let $h_\ell\colon (\sph^3_L,g_{\cos})\to (\sph^3_\ell,g_{\cos})$ be the smooth map defined as 
    \begin{equation*}
        h_\ell(x,t)=(x,t\ell/L)
    \end{equation*}
    for $(x,t)\in\sph^3\times[-\ell,\ell]=\sph^3_\ell$.
    Note that $\Lip(h_\ell)<\pi/(2L)$.
    Define $f_\ell\coloneqq h_\ell\circ f_V$.
    Since $\Lip(h_\ell)<\pi/(2L)$ and $L>\pi/(2+2\delta)$, then $\Lip(f_\ell)<(1-\delta)\pi/(2L)<1$.
    Finally, $f_\ell$ has non\nobreakdash-zero degree, since it is the composition of maps of non\nobreakdash-zero degree.    
    Since $\scal_g>12$ and $\cos(t)$ is log-concave, this leads to a contradiction with \cref{prop:scalar-mean_strict}.
\end{proof}

Next, we establish the scalar curvature rigidity of the $n$-sphere for Einstein manifolds.

\begin{lemma}\label{lem:Einstein_Llarull}
    Let $(M,g)$ be an $n$-dimensional closed connected oriented Einstein manifold with $\Ric_g=(n-1)g$.
    If $f\colon(M,g)\to (\sph^n,g_{\sph^n})$ is a smooth distance non\nobreakdash-increasing map of non\nobreakdash-zero degree,
    then $f$ is an isometry.
\end{lemma}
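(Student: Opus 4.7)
\emph{Proof plan for \cref{lem:Einstein_Llarull}.} The strategy is a pure volume comparison argument, as hinted in the introduction. Under the hypothesis $\Ric_g = (n-1)g$, Bishop's volume comparison theorem gives
\[
    \vol_g(M) \leq \vol(\sph^n, g_{\sph^n}).
\]
On the other side, since $f$ is $1$-Lipschitz, at each point $p \in M$ every singular value of the differential $df_p \colon T_pM \to T_{f(p)}\sph^n$ is at most $1$, so the Jacobian satisfies $|\det df_p| \leq 1$. Combining this with the degree formula,
\[
    |\deg f|\, \vol(\sph^n, g_{\sph^n}) = \left|\int_M f^* dV_{g_{\sph^n}}\right| \leq \int_M |\det df|\, dV_g \leq \vol_g(M).
\]

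Putting the two chains of inequalities together, and using $|\deg f| \geq 1$, I obtain $\vol_g(M) = \vol(\sph^n, g_{\sph^n})$, $|\deg f| = 1$, and $|\det df_p| = 1$ for every $p$. The key elementary linear-algebra step is now the following: if $L \colon V \to W$ is a linear map between Euclidean vector spaces of the same dimension with operator norm $\leq 1$ and $|\det L| = 1$, then all singular values of $L$ equal $1$, hence $L$ is an isometry. Applying this pointwise, $df_p$ is a linear isometry at every $p$, so $f$ is a local isometry.

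A local isometry from a closed Riemannian manifold to a connected Riemannian manifold of the same dimension is a covering map. Since $|\deg f| = 1$, this covering has one sheet, so $f$ is a diffeomorphism, and being a local isometry it is in fact a global isometry. I expect no real obstacle here: the only subtle point is invoking Bishop's inequality in the non-simply-connected setting (so that $\vol_g(M) \leq \vol(\sph^n, g_{\sph^n})$ holds without any diameter hypothesis beyond what Myers already gives from $\Ric_g=(n-1)g$), but this is classical and does not require the full rigidity statement of Cheng's maximal diameter theorem, since the sphericity of $M$ emerges automatically from the fact that $f$ itself turns out to be an isometric diffeomorphism onto $(\sph^n, g_{\sph^n})$.
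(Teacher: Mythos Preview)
Your proposal is correct and follows essentially the same approach as the paper: Bishop's volume inequality combined with the degree formula and the Jacobian bound $|\det df|\leq 1$ forces $|\det df|\equiv 1$, hence $f$ is a local isometry, hence a global one. The only cosmetic difference is in the last step, where the paper invokes simple-connectedness of $\sph^n$ to pass from local to global isometry, while you use $|\deg f|=1$; both are standard and valid.
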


\begin{proof}
    By degree theory,
    \begin{equation}\label{eq:Einstein_rigidity1}
        \deg(f)\int_{\sph^n}\,dV_{g_{\sph^n}}=\int_Mf^\ast(dV_{g_{\sph^n}}).
    \end{equation}
    Note that $f^\ast(dV_{g_{\sph^n}})=\det(df)\,dV_g$, where $\det(df_p)$ is the determinant of $df_p\colon T_pM\to T_{f(p)}\sph^n$ as linear map of oriented inner product spaces.
    From \eqref{eq:Einstein_rigidity1},
    \begin{equation}\label{eq:Einstein_rigidity}
        \left|\deg(f)\right|\vol(\sph^n,g_{\sph^n})
        \leq \int_M\left|\det(df)\right|\,dV_g.
    \end{equation}
    Let $p\in M$.
    Since $f$ is distance non-increasing, $\left|\det(df_p)\right|\leq1$.
    Since $\Ric_g=(n-1)g$, by Bishop's volume comparison~\cite[Section~9.1.1, Lemma~35]{Petersen_Riemannian_geometry} $\vol(M,g)\leq\vol(\sph^n,g_{\sph^n})$.
    Hence, from~\eqref{eq:Einstein_rigidity} we deduce
    \begin{equation*}
        \left|\deg(f)\right|\vol(\sph^n,g_{\sph^n})
        \leq \int_M\left|\det(df)\right|\,dV_g
        \leq\vol(M,g)
        \leq\vol(\sph^n,g_{\sph^n}).
    \end{equation*}
    Since $\deg(f)\neq0$, it follows that $\left|\deg(f)\right|=1$ and all inequalities must be equalities.
    Thus,
    \begin{equation*}
        \int_M\left|\det(df)\right|\,dV_g
        =\vol(M,g).
    \end{equation*}
    Therefore, $\left|\det(df_p)\right|=1$ for every $p\in M$.
    Since $f$ is distance non\nobreakdash-increasing, we conclude that $f$ is a local isometry.
    Since $\sph^n$ is simply-connected, $f$ is an isometry.
\end{proof}

We are now in the position to prove our main theorem.

\begin{proof}[Proof of \cref{thm:sphere}]
    Let us first show that $g$ must be Einstein with $\Ric_g=3g$.
    Suppose, by contradiction, that this is not the case. 
    By \cref{prop:Ricci_bump_up}, there exists a Riemannian metric $\tilde g$ on $M$ and a smooth map $\tilde f\colon (M,\tilde g)\to(\sph^4,g_{\sph^4})$ of non\nobreakdash-zero degree such that $\scal_{\tilde g}>12$ and $\Lip(f)<1$, contradicting \cref{lem:strict_inequalities}.

    We conclude that $g$ is Einstein with $\Ric_g=3 g$.
    By \cref{lem:Einstein_Llarull}, $f$ is an isometry.
\end{proof}

\bibliographystyle{amsalpha}
\bibliography{Llarull4D}

\end{document}